\documentclass[11pt]{article}
\usepackage[utf8]{inputenc}
\usepackage[T1]{fontenc}
\usepackage{amsmath,amsfonts,amssymb,amsthm,mathtools}
\usepackage{mathrsfs}
\usepackage{verbatim}
\usepackage[dvipsnames]{xcolor}
\usepackage[margin=1in]{geometry}
\usepackage{microtype}
\usepackage[colorlinks=true,linkcolor=blue,citecolor=blue,urlcolor=blue]{hyperref}

\theoremstyle{plain}
\newtheorem{theorem}{Theorem}[section]
\newtheorem{assumption}[theorem]{Assumption}

\newtheorem{lemma}[theorem]{Lemma}
\newtheorem{proposition}[theorem]{Proposition}
\theoremstyle{remark}

\newtheorem{remark}[theorem]{Remark}
\numberwithin{equation}{section}

\title{Gaussian equivalence for CNN with non-Gaussian weights}
\author{Federico Bassetti\thanks{Dipartimento di Matematica, Politecnico di
    Milano, Milano, Italy. \texttt{federico.bassetti@polimi.it}}
  \and
  Vassili De Palma\thanks{Departement Informatik, ETH Z\"urich,
    Switzerland. \texttt{vdepalma@ethz.ch}}}
\date{}

\newcommand{\RE}{\mathbb{R}}

\newcommand{\mf}{\mathfrak{m}}

\newcommand{\CC}{\mathcal{C}}

\newcommand{\bx}{\mathbf{x}}
\newcommand{\by}{\mathbf{y}}

\newcommand{\CF}{\mathcal{F}}

\newcommand{\CH}{\mathcal{H}}

\newcommand{\PP}{\mathbb{P}}

\newcommand{\CN}{\mathcal{N}}

\newcommand{\CM}{\mathcal{M}}

\newcommand{\CX}{\mathcal{X}}

\newcommand{\E}{\mathbf{E}}
\newcommand{\Var}{\mathbf{Var}}

\DeclareMathOperator{\Cov}{Cov}

\newcommand{\CU}{\mathcal{U}} %

\newcommand{\CL}{\mathcal{L}}
\newcommand{\CK}{\mathcal{K}}

\begin{document}

\maketitle

\begin{abstract}
We study the infinite-width limit of convolutional neural networks
  with random weights and show that the Gaussian process limit does not
  require Gaussian priors. Assuming that the weights are i.i.d.,
  centered, and have sufficiently many finite moments,  converges weakly
in the space of continuous functions on the compact input set
to independent copies of a centered Gaussian process.
 Our  framework accommodates activation functions of polynomial growth together
  with a broad class of patch-extraction operators, covering standard
  convolutions, padding, stride, and common pooling mechanisms.
\end{abstract}

\noindent\textbf{Keywords:} convolutional neural network; Gaussian process;
infinite-width limit; central limit theorem; non-Gaussian weights.

\noindent\textbf{MSC2020:} Primary 60F05; 60G15. Secondary 68T07.

%\section{Introduction}

The infinite-width limit of neural networks has attracted considerable
attention in recent years, both in probability and machine learning.
Under suitable random initialization schemes, the pre-activations of
fully connected networks converge to Gaussian processes, giving rise to
the so-called Neural Network Gaussian Process (NNGP), see 
\cite{g.2018gaussian,lee2018deep,Hanin2023}.
Beyond weak convergence, recent works have developed quantitative
refinements of this limit
\cite{Celli2026Wide,favaro2023quantitative,CelliPeccati2025Entropic},
as well as moderate and large deviation principles for both the network
output and the conditional covariance function
\cite{ABH2026,MacciTrevisanetal,MaPaTo24,Vog24}.
Other asymptotic regimes, where the number of layers or the number of
observations also diverges, have likewise been investigated
\cite{aiudi2023,baglioni2024predictive,BaLaRo,hanin2024,Mufan2023}.

More recently, analogous Gaussian equivalence results have been obtained
for convolutional neural networks (CNNs) with Gaussian weights, where the
convolutional architecture and the presence of pooling substantially
complicate the analysis
\cite{Rasmussen2019,novak2019bayesian,Yang2019TensorI,
yang2020feature,BassettiDePalmaLadelli2026LDP,BassettietalJMLR}. Finite dimensional limits for networks with non-Gaussian weights have
also been established in \cite{YangGolikov},
within the general framework of tensor programs.
The purpose of this note is to establish functional Gaussian equivalence for CNNs
under finite moment assumptions on the weights.

We consider a general class of convolutional
networks with $L$ hidden layers, described through deterministic
patch-extraction operators of polynomial growth. These operators
cover nonlinear activations, padding, stride and pooling, including
average and max pooling, and their composition yields standard CNN
architectures. Explicit examples are described in
\cite{BassettiDePalmaLadelli2026LDP}.

The weights are independent, centered and identically distributed within
each layer, with no restriction on their law beyond the existence of finitely
many moments, and we let the number of channels of every hidden layer tend to
infinity.

Our main result, Theorem~\ref{thm:cnn_clt}, states that, at every layer
beyond the first, any fixed number of channels converges weakly
in the space of continuous functions on the compact input set
to independent copies of a centered Gaussian process.
The limiting covariance kernels are defined recursively and depend
on the weight distributions through their variances and the
distribution of the first-layer pre-activations.

The proof proceeds by induction over the layers. Moment and increment bounds
are propagated through the network by means of Rosenthal's inequality.
Finite-dimensional convergence follows from a conditional Lyapunov central
limit theorem, combined with a law of large numbers for exchangeable
triangular arrays that identifies the limiting conditional variances.
Tightness in the space of continuous functions is then obtained from the
increment bounds via a Kolmogorov--Chentsov type criterion.

The remainder of the paper is organized as follows. In Section~\ref{sec:setup}
we describe the convolutional architecture, state our assumptions, and present
our main results. Section~\ref{sec:proofs} is devoted to the proofs.

\section{Convolutional neural networks and Gaussian equivalence}
\label{sec:setup}

We consider a convolutional neural network with $L$ hidden layers.
At layer $\ell$, neurons are indexed by a channel index
$c=1,\dots,C_\ell$ and a spatial index
$i\in\Lambda_\ell$. For simplicity, and without loss of generality,
we identify the spatial domain with
$\Lambda_\ell=\{1,\dots,N_\ell\}$.

Throughout the paper,
$C_\ell$ denotes the number of channels,
$N_\ell$ the spatial dimension,
$\CM_\ell$ the receptive field (mask) associated with layer $\ell$,
and $M_\ell:=|\CM_\ell|$ its cardinality.

For every neuron $i=1,\dots,N_{\ell+1}$, let
$T^{(i,\ell)}:\RE^{N_\ell}\rightarrow\RE^{M_\ell}$
be the corresponding patch-extraction operator.
This operator extracts the receptive field centered at location $i$
and may include nonlinear activation, pooling, padding, stride,
or other deterministic transformations.
We write
$T^{(i,\ell)}(z) = \bigl[ T^{(i,\ell)}_m(z): m\in\CM_\ell \bigr]$.

For an input
$\bx=[x_{c,i}] \in \RE^{C_0\times N_0}$,
the first hidden layer is defined by
\begin{equation}
\begin{split}
h_{c,i}^{(1)}(\bx)
=
\frac{1}{\sqrt{M_0C_0}}
\sum_{c'=1}^{C_0}
\sum_{m\in\CM_0}
W^{(0)}_{m,c',c}
\,T^{(i,0)}_m(x_{c',:}),
\\
c=1,\dots,C_1,
\qquad
i=1,\dots,N_1,
\end{split}
\end{equation}
where
$x_{c,:} = (x_{c,1},\dots,x_{c,N_0})^\top$.
For layers $\ell=1,\dots,L$, define
\begin{equation}\label{convolutiona.recur1}
\begin{split}
h_{c,i}^{(\ell+1)}(\bx)
=
\frac{1}{\sqrt{M_\ell C_\ell}}
\sum_{c'=1}^{C_\ell}
\sum_{m\in\CM_\ell}
W^{(\ell)}_{m,c',c}
\,T^{(i,\ell)}_m
\bigl(
h^{(\ell)}_{c',:}(\bx)
\bigr),
\\
c=1,\dots,C_{\ell+1},
\qquad
i=1,\dots,N_{\ell+1}.
\end{split}
\end{equation}
where
$h^{(\ell)}_{c,:}(\bx) = (h^{(\ell)}_{c,1}(\bx),\dots,h^{(\ell)}_{c,N_\ell}(\bx))^\top$.
 Examples of standard CNN architectures covered by this formalism, including
different pooling and padding schemes, can be found in \cite{BassettiDePalmaLadelli2026LDP}.
For simplicity we assume zero biases. 

In the rest of the paper, 
$\CX\subset\RE^{C_0\times N_0}$ is a fixed compact set of
inputs  and $\|\cdot\|$ is the Euclidean norm.

The following assumptions will be used throughout the paper.

\begin{assumption}[Infinite channels limit]\label{A1CNN-bis}
$L$, $N_0,\dots,N_{L+1}$,
$C_0$ and $C_{L+1}$ are fixed, whereas $C_\ell=C_\ell(n)$ with 
\[
\lim_{n \to +\infty} C_\ell(n)=+\infty, \quad \ell=1,\dots,L.
\]
\end{assumption}

\begin{assumption}[Local Lipschitz continuity]
\label{A2_0CNN-tris}
The operators $T^{(i,\ell)}$ are continuous and there exist $r\ge1$ and
$A>0$ such that, for every $i,\ell,m$ and all $z,z'\in\RE^{N_\ell}$,
\begin{align}
%\label{eq:polyT}
%\bigl|T^{(i,\ell)}_m(z)\bigr|
%&\le A\bigl(1+\|z\|^{r}\bigr),
%\\[2pt]
\label{eq:lipT}
\bigl|T^{(i,\ell)}_m(z)-T^{(i,\ell)}_m(z')\bigr|
\le A\bigl(1+\|z\|^{r-1}+\|z'\|^{r-1}\bigr)\|z-z'\| .
\end{align}
\end{assumption}

Condition \eqref{eq:lipT}  holds in particular for
compositions of ReLU or smooth sigmoidal activations with average or max
pooling, padding and stride.

The network is randomly initialized through a collection of random weights
$\{W^{(\ell)}_{m,c',c}\}$.
Their distributional assumptions are stated below.

\begin{assumption}[Weights]
\label{A3nongaussian}
Within each layer the weights $W^{(\ell)}_{m,c',c}$ are i.i.d., weights of
different layers are independent, $\E[W^{(\ell)}_{m,c',c}]=0$,
$\Var(W^{(\ell)}_{m,c',c})=\mf^{(\ell)}_2<+\infty$, and there exists
$q>2$
such that 
\begin{equation}
\label{eq:moments}
\E\bigl|W^{(\ell)}_{m,c',c}\bigr|^{q_{\ell}}<+\infty,
\qquad \ell=0,\dots,L,
\end{equation}
where the exponents $q_\ell$ are defined backwards from the top layer
by
\begin{equation}
\label{eq:aexp}
q_{L}:=q,
\qquad
q_{\ell-1}=r q_{\ell}
%a_\ell:=r_0\,a_{\ell+1}
\quad(\ell=L,\dots,1),
\qquad\text{that is}\quad
q_\ell=q \,r^{L-\ell} .
\end{equation}
\end{assumption}

\subsection{Gaussian equivalence at the functional level}

Under Assumption~\ref{A1CNN-bis}, the activations implicitly depend on $n$
through the channel widths $C_\ell=C_\ell(n)$. 
For every layer
$\ell=1,\dots,L+1$ and every $1\le C\le C_\ell$, define
\[
\CH_{1:C}^{(\ell,n)}(\bx)
:=
\Bigl(
h^{(\ell)}_{c,i}(\bx):
c=1,\dots,C,\;
i=1,\dots,N_\ell
\Bigr),
\qquad \bx\in\CX.
\]
Thus, $\CH_{1:C}^{(\ell,n)}$
is an $\RE^{C \times N_\ell}$-valued stochastic process. By continuity of the
patch-extraction operators, its sample paths are continuous, and hence
$\CH_{1:C}^{(\ell,n)}$ can be regarded as a random element of 
$\CC(\CX,\RE^{C \times N_\ell})$, the space of continuous functions taking values in $\RE^{C \times N_\ell}$ with the sup norm.

The limiting matrix-valued covariance kernels are defined recursively as
follows. Set
\[
\CU^{(1)}(\bx)
:=
\Bigl(
h^{(1)}_{1,i}(\bx):
i=1,\dots,N_1
\Bigr),
\qquad \bx\in\CX.
\]
Recursively for $\ell=1,\dots,L$, given $\CU^{(\ell)}$, define 
a matrix-valued kernel $\CK^{(\ell+1)}$
and a new process $\CU^{(\ell+1)}$ by 
\begin{equation}
\label{eq:cnn_nngp_recursion}
\CK^{(\ell+1)}_{i,j}(\bx,\by)
:=
\frac{\mf_2^{(\ell)}}{M_\ell}
\sum_{m\in\CM_\ell}
\E\!\left[
T_m^{(i,\ell)}\bigl(\CU^{(\ell)}(\bx)\bigr)\,
T_m^{(j,\ell)}\bigl(\CU^{(\ell)}(\by)\bigr)
\right],
\end{equation}
for $i,j=1,\dots,N_{\ell+1}$ 
and $\bx,\by \in \CX$ and 
taking  $\CU^{(\ell+1)}$ to be a centered
$\RE^{N_{\ell+1}}$-valued Gaussian process with 
matrix-valued covariance  kernel
$\CK^{(\ell+1)}$, that is,
\[
\Cov\!\left(
\CU^{(\ell+1)}_i(\bx),
\CU^{(\ell+1)}_j(\by)
\right)
=
\CK^{(\ell+1)}_{i,j}(\bx,\by),
\qquad
\bx,\by\in\CX.
\]

The following theorem is the main result of the paper.
It shows that, under finite moment assumptions on the weights,
the Gaussian equivalence previously established for Gaussian priors
continues to hold, even at the functional level.
Since the weights are non-Gaussian, the first-layer preactivations are not
Gaussian in general. Indeed, at this layer there is no aggregation over a
number of channels tending to infinity.
For this reason, the next result only covers the layers $\ell\geq 2$.

\begin{theorem}[Gaussian equivalence for convolutional neural networks]
\label{thm:cnn_clt}
Let {\rm Assumptions~\ref{A1CNN-bis}--\ref{A2_0CNN-tris}--\ref{A3nongaussian}} hold with   the strengthened condition $q>\max(2,C_0N_0)$.
Then, for every hidden layer $\ell=2,\dots,L$ and every fixed
$C^*<\infty$ (or $\ell=L+1$ and $C^*=C_{L+1}$),
\[
\CH^{(\ell,n)}_{1:C^*}
\stackrel{\CL}{\longrightarrow}
\CH^{(\ell,\infty)}_{1:C^*}
\qquad\text{in  
$\,\, \mathcal{C}(\CX,\RE^{C^* \times N_\ell})$},
\]
where
\[
\CH^{(\ell,\infty)}_{1:C^*}(\bx)
=
\Bigl(
h^{(\ell,\infty)}_{c,i}(\bx):
c=1,\dots,C^*,\;
i=1,\dots,N_\ell
\Bigr),
\qquad \bx\in\CX,
\]
is a centered $\RE^{C^* \times N_\ell}$-valued Gaussian process with continuous trajectories. 
Its channels are independent and identically distributed, with each
channel having the same law as $\CU^{(\ell)}$. More precisely,
\[
\Cov\!\left(
h^{(\ell,\infty)}_{c,i}(\bx),
h^{(\ell,\infty)}_{c',j}(\by)
\right)
=
\delta_{c,c'}\,
\CK^{(\ell)}_{i,j}(\bx,\by),
\qquad
\bx,\by\in\CX,
\]
where the covariance kernels $\CK^{(\ell)}$ are defined recursively in
\eqref{eq:cnn_nngp_recursion}.
\end{theorem}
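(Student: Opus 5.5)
The proof goes by induction on the layer $\ell\ge 2$, and at each layer we show two things. First, uniform moment and increment bounds of the form
\[
\sup_n\sup_{\bx\in\CX}\E\bigl|h^{(\ell)}_{c,i}(\bx)\bigr|^{p_\ell}<\infty,
\qquad
\E\bigl|h^{(\ell)}_{c,i}(\bx)-h^{(\ell)}_{c,i}(\by)\bigr|^{p_\ell}\le K\|\bx-\by\|^{p_\ell},
\]
with $p_\ell:=q_{\ell-1}$. Second, convergence in law in $\CC(\CX,\RE^{C^*\times N_\ell})$.

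\textbf{Moment and increment bounds.} At the first layer $h^{(1)}$ is a finite linear combination of the weights $W^{(0)}$ with coefficients continuous in $\bx$. Hence $\E|h^{(1)}(\bx)-h^{(1)}(\by)|^{q_0}\le K\|\bx-\by\|^{q_0}$, using only $\E|W^{(0)}|^{q_0}<\infty$ and Lipschitz bounds on $T^{(i,0)}$ over the compact set $\CX$. For the inductive step, condition on $\CF_\ell:=\sigma(W^{(0)},\dots,W^{(\ell-1)})$. Given $\CF_\ell$, $h^{(\ell+1)}_{c,i}(\bx)-h^{(\ell+1)}_{c,i}(\by)$ is a sum over $c'\le C_\ell$ of independent centered terms $W^{(\ell)}_{m,c',c}\Delta_{c',m}$, where $\Delta_{c',m}=T^{(i,\ell)}_m(h^{(\ell)}_{c',:}(\bx))-T^{(i,\ell)}_m(h^{(\ell)}_{c',:}(\by))$. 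Rosenthal's inequality with exponent $p=q_\ell$ bounds the conditional $p$-th moment by
\[
\Bigl(\tfrac{1}{C_\ell}\textstyle\sum_{c'}\Delta_{c'}^2\Bigr)^{p/2}+C_\ell^{1-p/2}\tfrac1{C_\ell}\textstyle\sum_{c'}|\Delta_{c'}|^p .
\]
Taking expectations and applying Jensen, exchangeability of channels, \eqref{eq:lipT} and Hölder, this reduces to $\E[(1+\|h^{(\ell)}\|^{r-1})^p\|h^{(\ell)}(\bx)-h^{(\ell)}(\by)\|^p]$. That quantity is controlled by the induction hypothesis at order $rp=q_{\ell-1}$, and this is exactly why the moment orders are chosen as $q_{\ell-1}=rq_\ell$. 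The same argument without differences gives the uniform moment bound.

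\textbf{Tightness.} At the layers $\ell\ge2$ under consideration we obtain the increment exponent $p_\ell\ge q$. Since $\CX\subset\RE^{C_0N_0}$ has dimension $d=C_0N_0$, the hypothesis $q>\max(2,C_0N_0)$ gives $p>d$, and the Kolmogorov--Chentsov criterion yields tightness in $\CC(\CX,\RE^{C^*\times N_\ell})$. Tightness at a fixed point follows from the moment bound.

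\textbf{Finite-dimensional convergence.} Fix points $\bx_1,\dots,\bx_k$ and a linear functional $\sum_{c\le C^*,i,s}\theta_{c,i,s}h^{(\ell+1)}_{c,i}(\bx_s)$. Conditionally on $\CF_\ell$, this is a sum over $c'$ of independent centered vectors, independent across output channels $c$ because their weights are disjoint. Its conditional variance is $\sum_c\theta_c^\top\Sigma_n\theta_c$, where
\[
\Sigma_n=\frac{\mf_2^{(\ell)}}{M_\ell C_\ell}\sum_{c'}\sum_m T^{(i,\ell)}_m(h^{(\ell)}_{c',:}(\bx_s))\,T^{(j,\ell)}_m(h^{(\ell)}_{c',:}(\bx_t)).
\]
The key step is $\Sigma_n\to\CK^{(\ell+1)}$ in probability. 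For this we use that the channels $h^{(\ell)}_{c',:}$ are exchangeable and, by induction, any fixed number of them converge jointly to i.i.d. copies of $\CU^{(\ell)}$; for $\ell=1$ they are exactly i.i.d. copies of $\CU^{(1)}$, so the plain law of large numbers suffices. A weak law for exchangeable triangular arrays then applies: the mean converges by uniform integrability (from the moment bounds with exponent $>2$ after applying $T$), and the variance vanishes because the covariance of two distinct channels tends to zero by asymptotic independence of pairs. The conditional Lyapunov condition is $C_\ell^{-p/2}\sum_{c'}\E|\cdot|^{p}\to0$ for $p=q>2$, which holds by the same moment bounds. A conditional CLT (via characteristic functions and dominated convergence) then gives the Gaussian limit with independent channels and covariance $\delta_{c,c'}\CK^{(\ell+1)}$. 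Since $\CU^{(\ell)}$ is a continuous process, the fixed-point limit identifies the covariance.

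\textbf{Main obstacle.} I expect the hardest step to be the law of large numbers for $\Sigma_n$ at layers $\ell\ge2$. The functions $T$ are unbounded, so convergence of $\E[T(h_1)T(h_1)]$ and of the pairwise covariances requires uniform integrability at order slightly above $2$ after applying $T$, i.e. moments of $h^{(\ell)}$ of order $>2r$. I will check that the budget $q_{\ell-1}=rq_\ell$ with $q>2$ provides this. The induction hypothesis must also carry joint convergence of pairs of channels, which the statement does provide for $C^*=2$.
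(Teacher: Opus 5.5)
Your architecture matches the paper's. It uses conditional Rosenthal propagation of moments and increments with exponents $q_{\ell-1}=rq_\ell$ (H\"older with exponents $r/(r-1)$ and $r$), a conditional Lyapunov CLT with Cram\'er--Wold for the finite-dimensional laws, and Kolmogorov--Chentsov with $p=q_{\ell-1}\ge q>C_0N_0$ for tightness. The gap is in the step you single out yourself, the law of large numbers for $\Sigma_n$ at layers $\ell\ge2$, where your moment bookkeeping fails. Moments of $h^{(\ell)}$ of order $>2r$ do give convergence of the mean $\E\Psi(U^{(n)}_1)$, with $\Psi$ a product of two $T$'s. Your second-moment argument
\[
\Var\Bigl(\frac1{C_\ell}\sum_{c'=1}^{C_\ell}\Psi(U^{(n)}_{c'})\Bigr)
=\frac1{C_\ell}\Var\Psi(U^{(n)}_1)+\Bigl(1-\frac1{C_\ell}\Bigr)\Cov\bigl(\Psi(U^{(n)}_1),\Psi(U^{(n)}_2)\bigr)
\]
needs more: $\sup_n\E\Psi(U^{(n)}_1)^2<\infty$ and uniform integrability of $\Psi(U^{(n)}_1)\Psi(U^{(n)}_2)$, i.e.\ moments of $h^{(\ell)}$ of order $>4r$. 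The available budget at $\ell=L$ is exactly $q_{L-1}=qr$, and $q_{\ell-1}=q$ at every layer when $r=1$. So your argument needs $q>4$, which $q>\max(2,C_0N_0)$ guarantees only if $C_0N_0\ge4$. For instance, take $r=1$, $C_0N_0\le3$, $q\in(\max(2,C_0N_0),4)$ and symmetric weights without a fourth moment. Then $h^{(2)}$ has infinite fourth moment, the summands of $\Sigma_n$ at $\ell=2$ need not be square integrable, and Chebyshev gives nothing.

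The repair is a truncation. Apply your variance argument to the bounded continuous $\Psi_K:=(\Psi\wedge K)\vee(-K)$. For $\Psi_K$, weak convergence of pairs to an independent pair directly gives $\Cov\bigl(\Psi_K(U^{(n)}_1),\Psi_K(U^{(n)}_2)\bigr)\to0$, and the diagonal term is at most $K^2/C_\ell$. By exchangeability, the remainder satisfies $\E\bigl|\frac1{C_\ell}\sum_{c'}(\Psi-\Psi_K)(U^{(n)}_{c'})\bigr|\le\sup_n\E|\Psi(U^{(n)}_1)|^{1+\varepsilon}K^{-\varepsilon}$ uniformly in $n$. The same bound holds for the limit by lower semicontinuity under weak convergence. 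This needs only $\varepsilon=q/2-1$, i.e.\ moments of order $rq\le q_{\ell-1}$, which is exactly what the budget provides.

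The paper reaches the same moment requirement via Skorokhod's representation and uniform integrability (Lemma~\ref{lemma:empirical_average_exchangeable}). However, its identity $\frac1n\sum_{k}\E|\Psi(Z_{n,k})-\Psi(Z_k)|=\E|\Psi(Z_{n,1})-\Psi(Z_1)|$ tacitly assumes that the Skorokhod coupling keeps the pairs $(Z_{n,k},Z_k)$ exchangeable, which it need not. Your truncated second-moment route is therefore, if anything, the more robust proof of that lemma.

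A minor point on tightness: the classical Kolmogorov--Chentsov criterion is stated on rectangles, whereas $\CX$ is an arbitrary compact set. Either run your bounds on a compact rectangle containing $\CX$ and restrict, or use a criterion for general compact sets as the paper does.
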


\subsection{Finite dimensional Gaussian convergence}

The functional Gaussian equivalence formulated in Theorem~\ref{thm:cnn_clt} relies on the convergence of finite-dimensional distributions together with tightness. While tightness requires stronger moment and regularity assumptions, finite-dimensional convergence alone can be established under weaker conditions.

\begin{assumption}[Polynomial growth assumption]
\label{A2_0CNN-bis}
The patch-extraction operators
$T^{(i,\ell)}$
are continuous.
Moreover, there exist
$r\ge1$
and
$A>0$
such that
\begin{equation}
\label{polyboundT}
|T^{(i,\ell)}_m(z)|
\le
A(1+\|z\|^r),
\qquad
z\in\RE^{N_\ell},
\end{equation}
for every $i,\ell,m$.
\end{assumption}

\begin{theorem}[Finite-dimensional convergence]
\label{thm:cnn_clt_FD}
Let Assumptions~\ref{A1CNN-bis}, \ref{A3nongaussian} and
\ref{A2_0CNN-bis} hold.
Fix $P\geq1$ and arbitrary points $\bx_1,\ldots,\bx_P\in\CX$.
Then, for every hidden layer $\ell=2,\ldots,L$ and every fixed
$C^*<\infty$ (or $\ell=L+1$ and $C^*=C_{L+1}$),
\[
\Bigl(
h^{(\ell)}_{c,i}(\bx_\mu):
c=1,\ldots,C^*,\
i=1,\ldots,N_\ell,\
\mu=1,\ldots,P
\Bigr)
\]
converges in distribution to a centered Gaussian random vector with
covariance matrix $\Sigma^{(\ell)}$ whose entries are
\begin{equation}\label{def.sigma.matrix}
\Sigma^{(\ell)}_{(c,i,\mu),(c',j,\nu)}
=
\delta_{c,c'}\,
\CK^{(\ell)}_{i,j}(\bx_\mu,\bx_\nu),
\end{equation}
for $c,c'=1,\ldots,C^*$, $i,j=1,\ldots,N_\ell$, and
$\mu,\nu=1,\ldots,P$, where the covariance kernels
$\CK^{(\ell)}$ are defined recursively in
\eqref{eq:cnn_nngp_recursion}.
\end{theorem}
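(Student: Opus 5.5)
We prove Theorem~\ref{thm:cnn_clt_FD} by induction on the layer $\ell\ge 2$, conditioning on everything below the current weights.

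\textbf{Conditional setup.} Fix $\ell\ge1$ and let $\CF_\ell$ be the $\sigma$-field generated by the weights $W^{(0)},\dots,W^{(\ell-1)}$. For each channel $c'$ of layer $\ell$, set
\[
Y_{c'}:=\bigl(T^{(i,\ell)}_m(h^{(\ell)}_{c',:}(\bx_\mu))\bigr)_{m,i,\mu},
\]
which is $\CF_\ell$-measurable. Conditionally on $\CF_\ell$, the vector $(h^{(\ell+1)}_{c,i}(\bx_\mu))_{c\le C^*,i,\mu}$ is a normalized sum over $c'=1,\dots,C_\ell$ of independent centered summands
\[
\frac{1}{\sqrt{M_\ell C_\ell}}\sum_m W^{(\ell)}_{m,c',c}\,(Y_{c'})_{m,i,\mu}.
\]
Different output channels $c$ use disjoint weights, so they are conditionally uncorrelated. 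The conditional covariance of output channel $c$ is
\[
\widehat\Sigma_{(i,\mu),(j,\nu)}=\frac{\mf_2^{(\ell)}}{M_\ell C_\ell}\sum_{c'}\sum_m (Y_{c'})_{m,i,\mu}(Y_{c'})_{m,j,\nu}.
\]

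\textbf{Step 1: uniform moment bounds.} We show by induction that $\sup_n \E|h^{(\ell)}_{c,i}(\bx)|^{q_{\ell-1}}<\infty$. At layer $1$ this follows from the moment assumption on $W^{(0)}$ and the polynomial growth bound \eqref{polyboundT} (inputs lie in the compact set $\CX$). For the inductive step, condition on $\CF_\ell$ and apply Rosenthal's inequality to the sum over $c'$. The $p$-th moment of $h^{(\ell+1)}$ with $p=q_\ell$ is then controlled by $\E|W|^{p}$ times $\E|T(h^{(\ell)})|^{p}$. By \eqref{polyboundT}, the latter needs $\E\|h^{(\ell)}\|^{rp}$, and $rp=rq_\ell=q_{\ell-1}$, which is exactly the backward exponent scheme \eqref{eq:aexp}. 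Exchangeability across $c'$ makes the bounds uniform in $C_\ell$.

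\textbf{Step 2: law of large numbers for $\widehat\Sigma$.} We show $\widehat\Sigma\to\CK^{(\ell+1)}(\bx_\mu,\bx_\nu)$ in probability. For $\ell=1$ the channels $h^{(1)}_{c',:}$ are exactly i.i.d. with the law of $\CU^{(1)}$, since they use disjoint weight columns. The ordinary weak law of large numbers then applies, with finite second moments of $Y$ supplied by Step 1 because $q_0\ge 2r$.

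For $\ell\ge2$ the rows $Y_{c'}$ are only exchangeable in $c'$, not independent, so we use the second-moment method. The mean of $\widehat\Sigma$ equals the expectation of the summand for a single channel $c'=1$, and it converges by the induction hypothesis: the $(\ell-1)$-th statement gives $h^{(\ell)}_{1,:}\Rightarrow\CU^{(\ell)}$, continuity of $T$ gives convergence of the products, and the moment bounds of Step 1 give uniform integrability. For the variance, write it as the diagonal terms plus the covariance of two distinct channels $c'=1,2$. The diagonal contribution is $O(1/C_\ell)$ by Step 1. The cross-covariance tends to $0$ because $(h^{(\ell)}_{1,:},h^{(\ell)}_{2,:})$ converges jointly to two \emph{independent} copies of $\CU^{(\ell)}$; this is the induction hypothesis applied with $C^*=2$, combined with uniform integrability of the products.

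\textbf{Step 3: conditional Lyapunov CLT.} Apply Lyapunov's CLT conditionally on $\CF_\ell$ to the linear combinations $\sum_{c,i,\mu}\theta_{c,i,\mu}h^{(\ell+1)}_{c,i}(\bx_\mu)$. The Lyapunov ratio with exponent $2+\delta$, where $\delta=\min(q-2,\cdot)>0$, is bounded by
\[
C_\ell^{-\delta/2}\,\E|W|^{2+\delta}\,\frac{1}{C_\ell}\sum_{c'}\|Y_{c'}\|^{2+\delta},
\]
whose expectation is $O(C_\ell^{-\delta/2})$ by Step 1. Since $q_\ell\ge q>2$, this tends to $0$ in $L^1$, hence in probability. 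Therefore the conditional characteristic function converges in probability to
\[
\exp\Bigl(-\tfrac12\sum_c\theta_c^\top\CK\,\theta_c\Bigr),
\]
using Step 2 together with conditional independence across the output channels $c$. Dominated convergence then yields the unconditional limit, with covariance $\delta_{c,c'}\CK^{(\ell+1)}$ as in \eqref{def.sigma.matrix}. For $\ell+1=L+1$, take $C^*=C_{L+1}$, which is fixed.

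\textbf{Base case and main obstacle.} The base case is $\ell+1=2$, which rests on the i.i.d.\ law of large numbers in Step 2; the induction hypothesis at level $\ell$ is used to go to level $\ell+1$. The main obstacle is Step 2 for $\ell\ge2$: identifying the limit of $\widehat\Sigma$ requires asymptotic independence of pairs of channels. This is why the induction must carry joint convergence for every fixed $C^*$ (in particular $C^*=2$), not just for a single channel, together with the uniform integrability supplied by the moment exponents $q_\ell$.
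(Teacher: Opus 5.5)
Your architecture is the same as the paper's, and Steps~1 and~3 and the base case are fine:
\begin{itemize}
\item Rosenthal-based moment propagation with the backward exponents $q_\ell$;
\item a conditional Lyapunov CLT applied to Cram\'er--Wold projections;
\item an induction that carries joint convergence of pairs of channels to independent limits.
\end{itemize}
The gap is in Step~2 for $\ell\ge2$.

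\textbf{Where Step~2 fails.} Write $\widehat\Sigma=\frac1{C_\ell}\sum_{c'}g(Y_{c'})$ with $g(Y)=\frac{\mf_2^{(\ell)}}{M_\ell}\sum_m Y_{m,i,\mu}Y_{m,j,\nu}$. Your second-moment argument needs more integrability than Step~1 provides:
\begin{itemize}
\item The claim that the diagonal part of $\Var(\widehat\Sigma)$ is $O(1/C_\ell)$ needs $\sup_n\E[g(Y_1)^2]<\infty$. This means uniformly bounded fourth moments of $T(h^{(\ell)}_{1,:})$, hence of $\|h^{(\ell)}_{1,:}\|^{4r}$.
\item Sending $\Cov(g(Y_1),g(Y_2))\to0$ through uniform integrability of $g(Y_1)g(Y_2)$ needs slightly more, namely $\|h^{(\ell)}_{1,:}\|^{4r+\eta}$ for some $\eta>0$.
\end{itemize}
Step~1 only gives $q_{\ell-1}$ moments. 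In the step $\ell=L$ (with $L\ge2$) one has $q_{L-1}=rq$, so your argument requires $q>4$, not $q>2$. For $r=1$ the same requirement appears at every layer.

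This is not a bookkeeping artifact. Given the lower layers, $h^{(L)}_{1,i}$ is a normalized sum of independent terms $W^{(L-1)}_{m,c',1}T_m(\cdot)$. If $W^{(L-1)}$ has no moments beyond order $q_{L-1}$, then in general neither does $h^{(L)}_{1,i}$. For example, with $r=1$, $q\in(2,4)$, symmetric weights and a ReLU-type patch operator, $\E[g(Y_1)^2]=\infty$.

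\textbf{How to repair it.} Truncate; this is what the paper's Lemma~\ref{lemma:empirical_average_exchangeable} encodes. The paper uses Skorokhod representation plus uniform integrability, and needs only $\sup_n\E|\Phi_{\ell,\alpha}(U^{(\ell,n)}_1)|^{1+\varepsilon}<\infty$ with $\varepsilon=q/2-1$. Within your framework, set $g_K:=(g\wedge K)\vee(-K)$, which is bounded and continuous.
\begin{itemize}
\item For $\frac1{C_\ell}\sum_{c'}g_K(Y_{c'})$ your second-moment argument goes through without integrability issues. The diagonal term is at most $K^2/C_\ell$. The mean and $\E[g_K(Y_1)g_K(Y_2)]$ converge by the induction hypothesis with $C^*=2$ and bounded continuity.
\item The remainder satisfies $\E\bigl|\frac1{C_\ell}\sum_{c'}(g-g_K)(Y_{c'})\bigr|\le\E\bigl[|g(Y_1)|\mathbf{1}\{|g(Y_1)|>K\}\bigr]\le K^{-\varepsilon}\sup_n\E|g(Y_1)|^{1+\varepsilon}$. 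Step~1 does control this, since $2r(1+\varepsilon)=rq\le q_{\ell-1}$.
\item The limit-side error $\E|(g-g_K)(Y_1^\infty)|$ vanishes as $K\to\infty$, by Fatou. Here $Y_1^\infty$ is the corresponding functional of $\CU^{(\ell)}$.
\end{itemize}
Letting $n\to\infty$ and then $K\to\infty$ gives $\widehat\Sigma\to\CK^{(\ell+1)}(\bx_\mu,\bx_\nu)$ in probability under $q>2$.

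With this modification your proof is complete. It is a slightly more elementary route to the same law of large numbers than the paper's, since it uses only exchangeability and convergence of pairs and avoids the Skorokhod coupling.
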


The assumptions of Theorem~\ref{thm:cnn_clt_FD} are weaker than those required for functional convergence in Theorem~\ref{thm:cnn_clt} in two respects. First, Assumption~\ref{A2_0CNN-tris} implies Assumption~\ref{A2_0CNN-bis} with the same \(r\), while the converse is false. 
Second, finite-dimensional convergence only requires \(q>2\), whereas functional convergence requires \(q>C_0N_0\), that is, more than \(C_0N_0\) finite moments for the weights of the last
layer. In particular, when \(r=1\), \(q_\ell=q\) for every \(\ell\), and the moment assumption reduces to the existence of a finite \(q\)-th moment at each
layer.

\begin{remark}[Comparison with tensor program results]
Finite dimensional limits for neural networks with non-Gaussian
weights have been established by
\cite{YangGolikov} in the general framework of tensor programs,
which encompasses a broad class of neural network architectures,
including CNNs.

Their main result, Theorem 3.7, assumes the existence of moments
of all orders for the matrix weights and polynomial smoothness
of the nonlinearities. An alternative result, Theorem D.2 and
Corollary D.3 in their supplementary material, allows Lipschitz
nonlinearities but requires sub-Gaussian matrix weights.

In contrast, our Theorem~\ref{thm:cnn_clt_FD} establishes
finite-dimensional Gaussian convergence under finite moment
assumptions and mere continuity with polynomial growth of the
patch-extraction operators. Moreover, Theorem~\ref{thm:cnn_clt} establishes weak convergence
in the space of continuous functions on a compact input set,
still assuming only finitely many moments of the weights.

\end{remark}

\section{Proofs}
\label{sec:proofs}

The proof proceeds by induction over the network layers.
In Subsection~\ref{Sec:momentProp}, we establish the moment and increment bounds needed throughout the proof.
We then prove finite-dimensional convergence by the Cram\'er--Wold argument, studying the linear projections
\begin{equation}\label{lin.proj}
S_n^{(\ell)}(\alpha)
:=
\sum_{c=1}^{C^*}
\sum_{i=1}^{N_{\ell+1}}
\sum_{\mu=1}^P
\alpha_{c,i,\mu}
h^{(\ell+1)}_{c,i}(\bx_\mu),
\qquad
\alpha\in\RE^{C^*\times N_{\ell+1}\times P},
\end{equation}
for fixed arbitrary inputs $\bx_1,\dots,\bx_P\in\CX$.
The argument is divided into two steps: conditional CLT 
estimates  (Subsection \ref{Sec:Lyap}) and Gaussian approximation (Subsection \ref{Sec:GausLP}).
Finally, the increment bounds and the Kolmogorov--Chentsov criterion yield tightness in the space of continuous functions (Subsection \ref{Sec:Kolmogorov--Chentsov}).

%%%%%%%%%%%%%%%%%%%%%%%%%%%%%%%%%%%%%%%%%%%%%%%%%%%%%%%%%%%%%%%%%%%%%
% Moment propagation
%%%%%%%%%%%%%%%%%%%%%%%%%%%%%%%%%%%%%%%%%%%%%%%%%%%%%%%%%%%%%%%%%%%%%
\subsection{Moment and increment bounds}\label{Sec:momentProp}

We begin by establishing the propagation of moments through the network.
The key ingredient is Rosenthal's inequality.  An analogous strategy is used for the FCNN with general weights in \cite{Celli2026Wide}.

\begin{proposition}[Rosenthal inequality, \cite{rosenthal1970}]
Let $(X_i)_{i=1}^n$ be independent, centered random variables and let
$q\ge2$.
Then there exists a universal constant $K_q>0$ such that
\[
\E\Big|\sum_{i=1}^n X_i\Big|^q
\le
K_q\Big(
\sum_{i=1}^n\E|X_i|^q
+
\Big(\sum_{i=1}^n\E|X_i|^2\Big)^{q/2}
\Big).
\]
\end{proposition}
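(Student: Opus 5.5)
The statement is the classical inequality of Rosenthal, which the paper quotes from \cite{rosenthal1970}. If one wanted a self-contained argument, I would proceed in three reductions: symmetrize, pass to the square function, and then prove a Rosenthal-type bound for sums of nonnegative independent variables by a self-improving estimate. Throughout, assume $\E|X_i|^q<\infty$ for all $i$, since otherwise the right-hand side is infinite and there is nothing to prove.

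\emph{Step 1 (symmetrization).} Let $(X_i')$ be an independent copy of $(X_i)$ and set $S=\sum_i X_i$ and $S'=\sum_i X_i'$. Since $\E[S'\mid (X_i)]=0$, conditional Jensen gives $\E|S|^q\le \E|S-S'|^q$. The differences $X_i-X_i'$ are independent and symmetric. Hence $S-S'$ has the same law as $\sum_i\varepsilon_i(X_i-X_i')$ with independent Rademacher signs $\varepsilon_i$.

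\emph{Step 2 (square function).} Conditioning on the $X$'s and applying Khintchine's inequality gives
\[
\E|S|^q\le B_q\,\E\Bigl(\sum_i (X_i-X_i')^2\Bigr)^{q/2}.
\]
Since $(X_i-X_i')^2\le 2X_i^2+2X_i'^2$ and $p:=q/2\ge1$, this is bounded by a constant times $\E\bigl(\sum_i X_i^2\bigr)^{p}$.

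\emph{Step 3 (nonnegative sums).} It remains to show that for independent $Y_i\ge0$ and $p\ge1$,
\[
\E\Bigl(\sum_i Y_i\Bigr)^p\le C_p\Bigl(\sum_i\E Y_i^p+\Bigl(\sum_i\E Y_i\Bigr)^p\Bigr).
\]
With $Y_i=X_i^2$ this gives exactly the two terms in the statement. Write $T=\sum_iY_i$ and $T_{-i}=T-Y_i$. Then
\[
\E T^p=\sum_i\E\bigl[Y_i(Y_i+T_{-i})^{p-1}\bigr].
\]
Using $(a+b)^{p-1}\le c_p(a^{p-1}+b^{p-1})$, the independence of $Y_i$ and $T_{-i}$, and the bound $T_{-i}\le T$, we get
\[
\E T^p\le c_p\sum_i\E Y_i^p+c_p\Bigl(\sum_i\E Y_i\Bigr)\bigl(\E T^p\bigr)^{(p-1)/p},
\]
where the last factor comes from Jensen/Hölder. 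Set $x=(\E T^p)^{1/p}$, which is finite because all $Y_i$ have finite $p$-th moments. The inequality then reads $x^p\le c_p a+c_p b\,x^{p-1}$, with $a=\sum_i\E Y_i^p$ and $b=\sum_i\E Y_i$. Young's inequality $c_p b\,x^{p-1}\le \tfrac12 x^p+C b^p$ lets us absorb the $x^p$ term into the left-hand side, which yields $x^p\le C_p(a+b^p)$.

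The main point needing care is this absorption step. It requires a priori finiteness of $\E T^p$, which holds here because $n$ is finite. It also requires keeping the constants universal, depending only on $q$, which they are since every constant above depends only on $p$ or $q$.
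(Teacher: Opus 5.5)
Your argument is correct, but it does not follow the paper, because the paper gives no proof of this statement: it quotes Rosenthal's inequality from the original 1970 reference and only uses it, conditionally on $\CF_\ell$, in Propositions~\ref{Prop:boundmom} and~\ref{lem:increments}.

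You instead give a self-contained proof along the standard modern route. Symmetrization and Khintchine reduce $\E|S|^q$ to $\E\bigl(\sum_i X_i^2\bigr)^{q/2}$. The nonnegative case then follows from the identity $\E T^p=\sum_i\E\bigl[Y_i(Y_i+T_{-i})^{p-1}\bigr]$, the independence of $Y_i$ and $T_{-i}$, the bound $\E T_{-i}^{p-1}\le \E T^{p-1}\le(\E T^p)^{(p-1)/p}$, and absorption via Young's inequality. The absorption is legitimate because $\E T^p<\infty$ for finitely many summands in $L^p$.

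Citing is the economical choice for a short note. What your version buys is that it shows explicitly the one property the paper actually needs: $K_q$ depends only on $q$, not on the number of summands or on the laws of the $X_i$. That universality is what justifies applying the inequality to the conditional law given $\CF_\ell$ with $C_\ell\to\infty$ summands, while still getting constants independent of $n$ and $C_\ell$. Your constants are not sharp in $q$, but this is irrelevant here since $q$ is fixed.
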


For future reference, let us introduce the following 
$\sigma$-field:
\[
\CF_\ell
:=
\sigma\!\left(
h^{(\ell)}_{c,j}(\bx):
c=1,\dots,C_\ell,\;
j=1,\dots,N_\ell,\;
\bx \in \CX
\right).
\]

\begin{proposition}[Propagation of moments through the layers]
\label{Prop:boundmom}
Let {\rm Assumptions~\ref{A1CNN-bis}--\ref{A3nongaussian}--\ref{A2_0CNN-bis}} hold.
For every $\ell=1,\dots,L+1$,
\begin{equation}\label{eq.increments.bound}
\sup_{n\ge1} \sup_{\bx \in \CX}
\E \!\left[ \|h^{(\ell)}_{1,:}(\bx)\|^{q_{\ell-1}} \right]
<\infty.
\end{equation}
\end{proposition}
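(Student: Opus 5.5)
We argue by induction on $\ell$, using Rosenthal's inequality conditionally on the previous layer. Since $N_\ell$ is fixed, it suffices to bound $\sup_n\sup_{\bx}\E|h^{(\ell)}_{1,i}(\bx)|^{q_{\ell-1}}$ for each fixed $i$. By exchangeability of channels, channel $1$ is representative.

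\emph{Base case $\ell=1$.} Here $h^{(1)}_{1,i}(\bx)$ is a finite sum, with $C_0M_0$ terms independent of $n$, of the form $W^{(0)}_{m,c',1}T^{(i,0)}_m(x_{c',:})$. By Assumption~\ref{A2_0CNN-bis}, the coefficients satisfy $|T^{(i,0)}_m(x_{c',:})|\le A(1+\sup_{\bx\in\CX}\|\bx\|^r)<\infty$ by compactness of $\CX$. Hence Minkowski's inequality and $\E|W^{(0)}|^{q_0}<\infty$ give the bound uniformly in $\bx$ and $n$. Note that $C_0$ is fixed, so no CLT scaling issue arises.

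\emph{Inductive step.} Assume the bound holds at layer $\ell$ with exponent $q_{\ell-1}$. Fix $i$ and $\bx$, and condition on $\CF_\ell$. Since the weights $W^{(\ell)}$ are independent of $\CF_\ell$, the summands
\[
X_{c'}:=\frac{1}{\sqrt{M_\ell C_\ell}}\sum_{m\in\CM_\ell}W^{(\ell)}_{m,c',1}T^{(i,\ell)}_m\bigl(h^{(\ell)}_{c',:}(\bx)\bigr),\qquad c'=1,\dots,C_\ell,
\]
are conditionally independent and centered. We apply Rosenthal's inequality with exponent $q_\ell\ge 2$, noting $q_\ell\ge q>2$. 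Write $Y_{c'}:=\sum_m|T^{(i,\ell)}_m(h^{(\ell)}_{c',:}(\bx))|$. Since $M_\ell$ is fixed, $\E[|X_{c'}|^{q_\ell}\mid\CF_\ell]\le K\,C_\ell^{-q_\ell/2}\E|W^{(\ell)}|^{q_\ell}\,Y_{c'}^{q_\ell}$ and $\E[|X_{c'}|^2\mid\CF_\ell]\le K C_\ell^{-1}\mf_2^{(\ell)}Y_{c'}^2$. Taking expectations, the Rosenthal bound becomes
\[
\E|h^{(\ell+1)}_{1,i}(\bx)|^{q_\ell}\le K\Bigl(C_\ell^{1-q_\ell/2}\,\tfrac{1}{C_\ell}\textstyle\sum_{c'}\E Y_{c'}^{q_\ell}+\E\bigl(\tfrac1{C_\ell}\sum_{c'}Y_{c'}^2\bigr)^{q_\ell/2}\Bigr).
\]
For the second term, Jensen's inequality for the average with $q_\ell/2\ge1$ bounds it by $\frac1{C_\ell}\sum_{c'}\E Y_{c'}^{q_\ell}$. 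The first term has the same form and carries the additional factor $C_\ell^{1-q_\ell/2}\le1$. By exchangeability of channels at layer $\ell$ (the weights $W^{(\ell-1)}_{\cdot,\cdot,c'}$ are i.i.d. across $c'$), $\E Y_{c'}^{q_\ell}=\E Y_1^{q_\ell}$. Finally, Assumption~\ref{A2_0CNN-bis} gives $Y_1^{q_\ell}\le K(1+\|h^{(\ell)}_{1,:}(\bx)\|^{rq_\ell})$. Since $rq_\ell=q_{\ell-1}$, the induction hypothesis yields a bound uniform in $n$ and $\bx$. The top layer $\ell=L+1$ is covered in the same way, with $C_{L+1}$ fixed irrelevant since only channel $1$ is involved.

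\emph{Main obstacle.} The only delicate point is the bookkeeping of exponents. One must check that the second-moment term in Rosenthal's inequality does not require more than $q_{\ell-1}$ moments at layer $\ell$, and that the $C_\ell$-powers are bounded uniformly in $n$. The Jensen step combined with the backward definition $q_{\ell-1}=rq_\ell$ settles both. A secondary technical point is measurability of the conditioning, since $\CF_\ell$ involves uncountably many $\bx$. Because we work at fixed $\bx$, we can condition instead on $\sigma(W^{(0)},\dots,W^{(\ell-1)})$, which avoids this issue.
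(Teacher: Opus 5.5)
Your proof is correct and follows essentially the same route as the paper: induction on $\ell$, conditional Rosenthal's inequality given the previous layer, Jensen's inequality to reduce the quadratic-variation term to a $q_\ell$-th moment, exchangeability across channels, and the polynomial growth bound combined with $rq_\ell=q_{\ell-1}$. The only cosmetic difference is that you first bound the conditional moments by the $\CF_\ell$-measurable quantity $\sum_m|T^{(i,\ell)}_m|$ and then average, whereas the paper keeps the weights inside its $Y_{c'}$ and applies Jensen twice.
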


\begin{proof}
Observe that
$q_0=qr^L$,
$q_{\ell-1}=rq_\ell$, $ \ell=1,\dots,L$.
We argue by induction on $\ell$.
For $\ell=1$, since $\CX$ is compact and the operators
$T^{(i,0)}$ are continuous,
$
\sup_{\bx\in\CX}
\bigl|T_m^{(i,0)}(x_{c',:})\bigr|<\infty$.
Since $C_0$, $M_0$ and $N_1$ are fixed and the weights have finite
$q_0$-moment by Assumption~\ref{A3nongaussian}, the claim follows
directly from the definition of the first layer.
Suppose now that \eqref{eq.increments.bound} holds. 
For fixed $c,i$,
\[
h^{(\ell+1)}_{c,i}(\bx)
=
\frac1{\sqrt{C_\ell}}
\sum_{c'=1}^{C_\ell}Y_{c'},
\]
where
\[
Y_{c'}=Y_{c'}(\bx,c,i)
=
\frac1{\sqrt{M_\ell}}
\sum_{m\in\CM_\ell}
W^{(\ell)}_{m,c',c}
T_m^{(i,\ell)}
\bigl(h^{(\ell)}_{c',:}(\bx)\bigr),
\qquad c'=1,\dots,C_\ell.
\]
The random variables $(Y_{c'})_{c'=1}^{C_\ell}$ are exchangeable and,
conditionally on $\CF_\ell$, independent and centered.
Applying Rosenthal's inequality conditionally on $\CF_\ell$ yields
\[
\E\!\left[
|h^{(\ell+1)}_{c,i}(\bx)|^{q_\ell}
\,\middle|\,
\CF_\ell
\right]
\le
\frac{K_{q_\ell}}{C_\ell^{q_\ell/2}}
\sum_{c'=1}^{C_\ell}
\E\!\left[
|Y_{c'}|^{q_\ell}
\,\middle|\,
\CF_\ell
\right]
+
K_{q_\ell}
\left(
\frac1{C_\ell}
\sum_{c'=1}^{C_\ell}
\E\!\left[
|Y_{c'}|^2
\,\middle|\,
\CF_\ell
\right]
\right)^{q_\ell/2}.
\]
Taking expectations,
$\E |h^{(\ell+1)}_{c,i}(\bx)|^{q_\ell}\le I_1+I_2$, where
\[
I_1
=
\frac{K_{q_\ell}}{C_\ell^{q_\ell/2}}
\sum_{c'=1}^{C_\ell}\E|Y_{c'}|^{q_\ell}
=
K_{q_\ell}C_\ell^{1-q_\ell/2}\E|Y_1|^{q_\ell},
\]
and, using Jensen's inequality twice,
\[
I_2
\le
\frac{K_{q_\ell}}{C_\ell}
\sum_{c'=1}^{C_\ell}
\E\left[
\left(
\E[|Y_{c'}|^2\mid\CF_\ell]
\right)^{q_\ell/2}
\right] 
\le
\frac{K_{q_\ell}}{C_\ell}
\sum_{c'=1}^{C_\ell}
\E|Y_{c'}|^{q_\ell}
=
K_{q_\ell}\E|Y_1|^{q_\ell}.
\]
Using again Jensen's inequality, together with
\eqref{polyboundT} and Assumption~\ref{A3nongaussian}, one has
\[
\E|Y_1|^{q_\ell}
\le
D_1\left(
1+\E\|h^{(\ell)}_{1,:}(\bx)\|^{rq_\ell}
\right),
\]
where $D_1$ is independent of $n$ and $C_\ell$.
Since $rq_\ell=q_{\ell-1}$,
the induction hypothesis implies
$\sup_{\bx\in\CX}\E|Y_1(\bx,c,i)|^{q_\ell}\le D_2$.

Combining the previous estimates,
$\sup_{\bx\in\CX}
\E|h^{(\ell+1)}_{c,i}(\bx)|^{q_\ell}
\le D_3$,
with $D_3$ independent of $n$, $i$, $c$ and $C_\ell$.
Since $N_{\ell+1}$ is fixed,
$
\sup_{\bx\in\CX}
\E\|h^{(\ell+1)}_{1,:}(\bx)\|^{q_\ell}
\le D_5$,
uniformly in $n$.
\end{proof}

\begin{proposition}[Propagation of increments]
\label{lem:increments}
Under  {\rm Assumptions~\ref{A1CNN-bis}--\ref{A2_0CNN-tris}--\ref{A3nongaussian}} with the strengthened condition $q>\max(2,C_0N_0)$, for every $\ell=1,\dots,L+1$ there
is a constant $B_\ell<\infty$, independent of $n$, such that
\begin{equation}
\label{eq:increment}
\sup_{n\ge1}\,
\E\Bigl[\bigl\|h^{(\ell)}_{1,:}(\bx)-h^{(\ell)}_{1,:}(\by)\bigr\|^{q_{\ell-1}}\Bigr]
\le
B_\ell\,\|\bx-\by\|^{q_{\ell-1}},
\qquad \bx,\by\in\CX .
\end{equation}
\end{proposition}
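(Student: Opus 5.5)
We argue by induction on $\ell$, mirroring the proof of Proposition~\ref{Prop:boundmom}.

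\textbf{Base case $\ell=1$.} Write
\[
h^{(1)}_{1,i}(\bx)-h^{(1)}_{1,i}(\by)
=\frac{1}{\sqrt{M_0C_0}}\sum_{c',m}W^{(0)}_{m,c',1}\bigl(T^{(i,0)}_m(x_{c',:})-T^{(i,0)}_m(y_{c',:})\bigr).
\]
This is a finite sum. Compactness of $\CX$ bounds $\|x_{c',:}\|,\|y_{c',:}\|$, so \eqref{eq:lipT} gives
\[
\bigl|T^{(i,0)}_m(x_{c',:})-T^{(i,0)}_m(y_{c',:})\bigr|\le A'\|\bx-\by\|
\]
with $A'$ depending only on $\CX$. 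Minkowski's inequality in $L^{q_0}$ and the finite $q_0$-moment of the weights then give \eqref{eq:increment} for $\ell=1$.

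\textbf{Inductive step.} Assume \eqref{eq:increment} at layer $\ell$. For fixed $c,i$,
\[
h^{(\ell+1)}_{c,i}(\bx)-h^{(\ell+1)}_{c,i}(\by)=C_\ell^{-1/2}\sum_{c'}\Delta_{c'},
\qquad
\Delta_{c'}=M_\ell^{-1/2}\sum_m W^{(\ell)}_{m,c',c}\bigl(T^{(i,\ell)}_m(h^{(\ell)}_{c',:}(\bx))-T^{(i,\ell)}_m(h^{(\ell)}_{c',:}(\by))\bigr).
\]
Conditionally on $\CF_\ell$ the $\Delta_{c'}$ are independent and centered, and they are exchangeable. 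We apply the conditional Rosenthal inequality with exponent $q_\ell$ and handle the two terms $I_1,I_2$ exactly as in the proof of Proposition~\ref{Prop:boundmom}. Both are bounded by $K_{q_\ell}(1+C_\ell^{1-q_\ell/2})\E|\Delta_1|^{q_\ell}$, uniformly in $C_\ell$ since $q_\ell\ge 2$. Because the weights are independent of $\CF_\ell$ and have finite $q_\ell$-moment,
\[
\E|\Delta_1|^{q_\ell}\le D\sum_m\E\bigl|T^{(i,\ell)}_m(h^{(\ell)}_{1,:}(\bx))-T^{(i,\ell)}_m(h^{(\ell)}_{1,:}(\by))\bigr|^{q_\ell}.
\]

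\textbf{Main step: combining Lipschitz growth with the moment bounds.} This requires matching exponents. By \eqref{eq:lipT},
\[
\bigl|T^{(i,\ell)}_m(u)-T^{(i,\ell)}_m(v)\bigr|^{q_\ell}\le A^{q_\ell}\bigl(1+\|u\|^{r-1}+\|v\|^{r-1}\bigr)^{q_\ell}\|u-v\|^{q_\ell}.
\]
Apply H\"older's inequality with exponents $r/(r-1)$ and $r$ (if $r=1$ no H\"older is needed). The growth factor becomes $\E\bigl[(1+\|u\|^{r-1}+\|v\|^{r-1})^{rq_\ell/(r-1)}\bigr]^{(r-1)/r}$, which involves only moments of order $rq_\ell=q_{\ell-1}$. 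Proposition~\ref{Prop:boundmom} bounds these uniformly in $n$ and $\bx\in\CX$. The increment factor becomes $\bigl(\E\|u-v\|^{rq_\ell}\bigr)^{1/r}=\bigl(\E\|h^{(\ell)}_{1,:}(\bx)-h^{(\ell)}_{1,:}(\by)\bigr\|^{q_{\ell-1}}\bigr)^{1/r}$. The induction hypothesis bounds this by $B_\ell^{1/r}\|\bx-\by\|^{q_\ell}$.

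Hence
\[
\E|h^{(\ell+1)}_{c,i}(\bx)-h^{(\ell+1)}_{c,i}(\by)|^{q_\ell}\le D'\|\bx-\by\|^{q_\ell},
\]
with $D'$ independent of $n,c,i$. Summing over the fixed $N_{\ell+1}$ spatial indices, via $\|v\|^{q}\le N^{q/2-1}\sum_i|v_i|^q$, yields \eqref{eq:increment} at layer $\ell+1$.

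The exponent bookkeeping $q_{\ell-1}=rq_\ell$ is exactly what makes the H\"older split close. The strengthened condition $q>C_0N_0$ is not needed here; it will be used only later, in the Kolmogorov--Chentsov step.
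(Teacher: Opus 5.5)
Your proposal is correct and follows essentially the same route as the paper. It proceeds by induction on $\ell$, takes the base case from the local Lipschitz bound on the compact set $\CX$, reuses the conditional Rosenthal/Jensen estimate from Proposition~\ref{Prop:boundmom}, and applies H\"older with exponents $r/(r-1)$ and $r$ so that $rq_\ell=q_{\ell-1}$ matches both the moment bound and the induction hypothesis. Your remark that $q>C_0N_0$ is not used in this proposition, and only enters the tightness step, is also accurate.
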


\begin{proof}
We argue by induction on $\ell$.
For $\ell=1$, by Assumption~\ref{A2_0CNN-tris} and the compactness of
$\CX$, there exists $A^*<\infty$ such that, for every $i,m,c'$ and
$\bx,\by\in\CX$,
$\left|
T_m^{(i,0)}(x_{c',:})-T_m^{(i,0)}(y_{c',:})
\right|
\le A^*\|\bx-\by\|$.
Since $C_0$, $M_0$ and $N_1$ are fixed and the weights have finite
$q_0$-moment, the definition of the first layer gives
\[
\E\left[
\|h^{(1)}_{1,:}(\bx)-h^{(1)}_{1,:}(\by)\|^{q_0}
\right]
\le B_1\|\bx-\by\|^{q_0}.
\]

Suppose now that the claim holds at layer $\ell$ and set $p=q_\ell$.
For fixed $c,i$ and $\bx,\by\in\CX$, write
\[
h^{(\ell+1)}_{c,i}(\bx)-h^{(\ell+1)}_{c,i}(\by)
=
\frac1{\sqrt{C_\ell}}
\sum_{c'=1}^{C_\ell}\widetilde Y_{c'},
\]
where
\[
\widetilde Y_{c'}
:=
\frac1{\sqrt{M_\ell}}
\sum_{m\in\CM_\ell}
W^{(\ell)}_{m,c',c}\,
\widetilde T_{m,c'}(\bx,\by),
%\]
%with
%\[
\quad \widetilde T_{m,c'}(\bx,\by)
:=
T_m^{(i,\ell)}
\bigl(h^{(\ell)}_{c',:}(\bx)\bigr)
-
T_m^{(i,\ell)}
\bigl(h^{(\ell)}_{c',:}(\by)\bigr).
\]
Arguing as in the proof of Proposition~\ref{Prop:boundmom}, with
$Y_{c'}$ replaced by $\widetilde Y_{c'}$, conditional Rosenthal's
inequality and Jensen's inequality yield
\[
\E\left|
h^{(\ell+1)}_{c,i}(\bx)
-h^{(\ell+1)}_{c,i}(\by)
\right|^p
\le D_1\E|\widetilde Y_1|^p
\le
D_2\sum_{m\in\CM_\ell}
\E|\widetilde T_{m,1}(\bx,\by)|^p .
\]

By Assumption~\ref{A2_0CNN-tris},
\[
|\widetilde T_{m,1}(\bx,\by)|
\le
A\,G(\bx,\by)\Delta(\bx,\by),
\]
where
\[
G(\bx,\by)
=
1+\|h^{(\ell)}_{1,:}(\bx)\|^{r-1}
 +\|h^{(\ell)}_{1,:}(\by)\|^{r-1},
\qquad
\Delta(\bx,\by)
=
\|h^{(\ell)}_{1,:}(\bx)-h^{(\ell)}_{1,:}(\by)\|.
\]
If $r=1$, then $G\equiv 3$ and, since $p\le q_{\ell-1}$, the induction
hypothesis together with the monotonicity of $L^s$ norms gives
$\E|\widetilde T_{m,1}(\bx,\by)|^p\le D_3\|\bx-\by\|^p$.
If $r>1$, H\"older's inequality with exponents $r/(r-1)$ and $r$ yields
\[
\E|\widetilde T_{m,1}(\bx,\by)|^p
\le
A^p
\bigl(\E G(\bx,\by)^{pr/(r-1)}\bigr)^{(r-1)/r}
\bigl(\E\Delta(\bx,\by)^{pr}\bigr)^{1/r}.
\]
Since
$G(\bx,\by)^{pr/(r-1)}
\le D\bigl(1+\|h^{(\ell)}_{1,:}(\bx)\|^{pr}+\|h^{(\ell)}_{1,:}(\by)\|^{pr}\bigr)$
and $pr=q_{\ell-1}$,Proposition~\ref{Prop:boundmom} bounds the
first factor uniformly in $n,\bx,\by$, while the induction hypothesis,
together with the monotonicity of $L^s$ norms, gives
\[
\bigl(\E\Delta(\bx,\by)^{pr}\bigr)^{1/r}
\le D_3\|\bx-\by\|^p.
\]
Consequently,
\[
\E\left|
h^{(\ell+1)}_{c,i}(\bx)
-h^{(\ell+1)}_{c,i}(\by)
\right|^p
\le D_4\|\bx-\by\|^p.
\]
Since $N_{\ell+1}$ is fixed, this yields
\[
\E\left[
\|h^{(\ell+1)}_{1,:}(\bx)
-h^{(\ell+1)}_{1,:}(\by)\|^{q_\ell}
\right]
\le B_{\ell+1}\|\bx-\by\|^{q_\ell},
\]
uniformly in $n$, and the induction is complete.
\end{proof}

\subsection{Conditional Lyapunov condition and variances}\label{Sec:Lyap}

Fix $P \geq 1$ and $\bx_1,\dots,\bx_P$ in $\CX$. 
Using \eqref{convolutiona.recur1}, the linear projections
\eqref{lin.proj} can be written as
\[
S_n^{(\ell)}(\alpha) = \frac1{\sqrt{C_\ell}} \sum_{c'=1}^{C_\ell} \xi_{c'}^{(n,\ell)}(\alpha),
\]
where
\[
\xi_{c'}^{(n,\ell)}(\alpha) := \frac1{\sqrt{M_\ell}} \sum_{m\in\CM_\ell} \sum_{c=1}^{C^*} \sum_{i=1}^{N_{\ell+1}} \sum_{\mu=1}^P \alpha_{c,i,\mu}\, W^{(\ell)}_{m,c',c}\, T_m^{(i,\ell)} \bigl( h^{(\ell)}_{c',:}(\bx_\mu) \bigr).
\]

Conditionally on $\CF_\ell$, the random variables
$\bigl( \xi_{c'}^{(n,\ell)}(\alpha) \bigr)_{c'=1}^{C_\ell}$
are independent and centered. The next lemma verifies the corresponding
conditional Lyapunov condition.

\begin{lemma}[Conditional Lyapunov condition]
\label{lemma:lyapunov_cnn}
Under  {\rm Assumptions~\ref{A1CNN-bis}--\ref{A3nongaussian}--\ref{A2_0CNN-bis}}, for every $\ell=1,\ldots,L$, $\alpha\in\RE^{C^*\times N_{\ell+1}\times P}$,
and $p\in(2,q]$,
\[
\frac{1}{C_\ell^{p/2}}
\sum_{c'=1}^{C_\ell}
\E\!\left[
|\xi_{c'}^{(n,\ell)}(\alpha)|^p
\,\middle|\,\CF_\ell
\right]
\longrightarrow 0
\qquad\text{in $L^1$}.
\]
\end{lemma}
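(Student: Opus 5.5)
Since the conditional Lyapunov sum is nonnegative, convergence in $L^1$ is equivalent to convergence of its expectation to zero. Taking expectations and using exchangeability of $(\xi_{c'}^{(n,\ell)}(\alpha))_{c'}$, we reduce the claim to
\[
\frac{1}{C_\ell^{p/2}}\sum_{c'=1}^{C_\ell}\E|\xi_{c'}^{(n,\ell)}(\alpha)|^p
= C_\ell^{1-p/2}\,\E|\xi_{1}^{(n,\ell)}(\alpha)|^p .
\]
Because $p>2$ and $C_\ell\to\infty$ by Assumption~\ref{A1CNN-bis}, we have $C_\ell^{1-p/2}\to0$. It therefore suffices to show that $\sup_n\E|\xi_1^{(n,\ell)}(\alpha)|^p<\infty$.

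To bound this moment, we note that $\xi_1^{(n,\ell)}(\alpha)$ is a finite sum over $m\in\CM_\ell$, $c\le C^*$, $i\le N_{\ell+1}$ and $\mu\le P$. The number of terms is fixed and independent of $n$. By the elementary inequality $|\sum_{k=1}^K a_k|^p\le K^{p-1}\sum_k|a_k|^p$, it is enough to bound each
\[
\E\bigl|W^{(\ell)}_{m,1,c}\,T^{(i,\ell)}_m\bigl(h^{(\ell)}_{1,:}(\bx_\mu)\bigr)\bigr|^p .
\]
The weight $W^{(\ell)}_{m,1,c}$ is independent of $\CF_\ell$, and hence of $h^{(\ell)}_{1,:}$. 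The expectation therefore factorizes as $\E|W^{(\ell)}_{m,1,c}|^p\cdot\E|T^{(i,\ell)}_m(h^{(\ell)}_{1,:}(\bx_\mu))|^p$. The first factor is finite because $p\le q=q_\ell\le q_{\ell'}$ for all $\ell'$ and Assumption~\ref{A3nongaussian} holds. By \eqref{polyboundT}, the second factor is at most $2^{p-1}A^p(1+\E\|h^{(\ell)}_{1,:}(\bx_\mu)\|^{rp})$.

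The key point, and the only step needing care, is matching the exponents. We have $rp\le rq_\ell=q_{\ell-1}$, so Proposition~\ref{Prop:boundmom}, combined with monotonicity of $L^s$ norms (Jensen), gives $\sup_n\sup_{\bx\in\CX}\E\|h^{(\ell)}_{1,:}(\bx)\|^{rp}<\infty$. For channels $c'\neq1$ the same bound holds, since the channels of layer $\ell$ are identically distributed. This gives the uniform bound $\sup_n\E|\xi_1^{(n,\ell)}(\alpha)|^p\le D<\infty$, with $D$ depending only on $\alpha,P,C^*,N_{\ell+1},M_\ell,A,r,p$. Consequently the expectation of the Lyapunov sum is at most $D\,C_\ell^{1-p/2}\to0$, which proves the lemma.
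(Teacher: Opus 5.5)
Your proposal is correct and follows essentially the same route as the paper: exchangeability reduces the claim to $C_\ell^{1-p/2}\,\E|\xi_1^{(n,\ell)}(\alpha)|^p$, and the polynomial growth bound together with Proposition~\ref{Prop:boundmom} (via $rp\le q_{\ell-1}$) gives a moment bound uniform in $n$. One small slip: in general $q=q_L\le q_\ell$, so you should not write $q=q_\ell$; this does not affect the argument, since $p\le q\le q_\ell$ is all you need for $\E|W^{(\ell)}_{m,1,c}|^p<\infty$.
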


\begin{proof}
Since the random variable above is non-negative, it is enough to prove that
\[
\E\!\left[
\frac1{C_\ell^{p/2}}
\sum_{c'=1}^{C_\ell}
\E\!\left[
|\xi_{c'}^{(n,\ell)}(\alpha)|^p
\,\middle|\,\CF_\ell
\right]
\right]
\longrightarrow 0.
\]
By exchangeability,
\[
\E\!\left[
\frac1{C_\ell^{p/2}}
\sum_{c'=1}^{C_\ell}
\E\!\left[
|\xi_{c'}^{(n,\ell)}(\alpha)|^p
\,\middle|\,\CF_\ell
\right]
\right]
=
C_\ell^{1-p/2}
\E\!\left[
|\xi_{1}^{(n,\ell)}(\alpha)|^p
\right].
\]
Using standard inequalities, \eqref{polyboundT},
Assumption~\ref{A3nongaussian}, and the fact that
$M_\ell$, $C^*$, $N_{\ell+1}$ and $P$ are fixed,
\begin{align*}
\E\!\left[
|\xi_{1}^{(n,\ell)}(\alpha)|^p
\right]
&\le
D_1
\sum_{m\in\CM_\ell}
\sum_{c=1}^{C^*}
\sum_{i=1}^{N_{\ell+1}}
\sum_{\mu=1}^P
|\alpha_{c,i,\mu}|^p
\E\left[
\bigl(
1+|h^{(\ell)}_{1,:}(\bx_\mu)|^r
\bigr)^p
\right] 
\\
&\le
D_2
\sum_{\mu=1}^P
\left(
1+
\E\|h^{(\ell)}_{1,:}(\bx_\mu)\|^{rp}
\right).
\end{align*}
Since
$rp\le rq =q_{L-1}\le q_{\ell-1}$,
the monotonicity of $L^s$ norms and Proposition~\ref{Prop:boundmom}
imply
\[
\sup_{n\ge1}\sup_{\mu=1,\ldots,P}
\E\!\left[
\|h^{(\ell)}_{1,:}(\bx_\mu)\|^{rp}
\right]
<\infty.
\]
Hence
$\E [|\xi_{1}^{(n,\ell)}(\alpha)|^p]
\le D_3$
uniformly in $n$. Since $C_\ell\longrightarrow\infty$ and
$1-p/2<0$, we conclude that
\[
C_\ell^{1-p/2}
\E\!\left[
|\xi_{1}^{(n,\ell)}(\alpha)|^p
\right]
\le
D_3 C_\ell^{1-p/2}
\longrightarrow0,
\]
\end{proof}

%%%%%%%%%%%%%%%%%%%%%%%%%%%%%%%%%%%%%%%%%%%%%%%%%%%%%%%%%%%%%%%%%%%%%
% Conditional variances
%%%%%%%%%%%%%%%%%%%%%%%%%%%%%%%%%%%%%%%%%%%%%%%%%%%%%%%%%%%%%%%%%%%%%

%\subsection{Conditional variances}\label{Sec:ConVar}
We next identify the asymptotic behavior of the conditional variances.
Conditionally on $\CF_\ell$, the weights
$\{W^{(\ell)}_{m,c',c}:m\in\CM_\ell,\ c=1,\dots,C^*\}$
are independent, centered, and satisfy
$\E\!\left[ (W^{(\ell)}_{m,c',c})^2 \right] = \mf_2^{(\ell)}$.
Therefore,
\begin{align}
\E\!\left[
|\xi_{c'}^{(n,\ell)}(\alpha)|^2
\,\middle|\,
\CF_\ell
\right]
&=
\frac{\mf_2^{(\ell)}}{M_\ell}
\sum_{m\in\CM_\ell}
\sum_{c=1}^{C^*}
\left(
\sum_{i=1}^{N_{\ell+1}}
\sum_{\mu=1}^{P}
\alpha_{c,i,\mu}
T_m^{(i,\ell)}
\bigl(
h^{(\ell)}_{c',:}(\bx_\mu)
\bigr)
\right)^2.
\label{eq:conditional_variance_xi}
\end{align}

The conditional variance of $S_n^{(\ell)}(\alpha)$ is therefore
\begin{equation}
\label{eq:conditional_variance_sum}
V_n^{(\ell)}(\alpha)
:=
\frac1{C_\ell}
\sum_{c'=1}^{C_\ell}
\E\!\left[
|\xi_{c'}^{(n,\ell)}(\alpha)|^2
\,\middle|\,
\CF_\ell
\right].
\end{equation}

The key observation is that
$V_n^{(\ell)}(\alpha)$
can be expressed as an empirical average of a deterministic function
evaluated at the random matrices
$U_c^{(\ell,n)}$,  where
\begin{equation}\label{defU}
U_c^{(\ell,n)}
:=
\bigl[
h^{(\ell)}_{c,i}(\bx_\mu):
i=1,\dots,N_\ell,\;
\mu=1,\dots,P
\bigr]
\in\RE^{N_\ell\times P},
\qquad
c=1,\dots,C_\ell .
\end{equation}
By construction, the network weights are independent and identically
distributed across channels. Consequently, the family
$\bigl(U_c^{(\ell,n)}\bigr)_{c=1}^{C_\ell}$
is exchangeable.

For
$u=(u_{j,\mu}) \in \mathbb R^{N_\ell\times P}$,
let $u_{:,\mu}$ denote the $\mu$-th column of $u$, and define
\[
\Phi_{\ell,\alpha}(u) := \frac{\mf_2^{(\ell)}}{M_\ell} \sum_{m\in\CM_\ell} \sum_{c=1}^{C^*} \left( \sum_{i=1}^{N_{\ell+1}} \sum_{\mu=1}^{P} \alpha_{c,i,\mu} T_m^{(i,\ell)}(u_{:,\mu}) \right)^2.
\]
Hence,
$V_n^{(\ell)}(\alpha) = \frac1{C_\ell} \sum_{c'=1}^{C_\ell} \Phi_{\ell,\alpha} \bigl( U_{c'}^{(\ell,n)} \bigr)$.
The convergence of
$V_n^{(\ell)}(\alpha)$
is therefore reduced to a law of large numbers for exchangeable triangular
arrays. 
%The following auxiliary result is  proved in Section~\ref{Appendix}. 

\begin{lemma}
\label{lemma:empirical_average_exchangeable}
Let $(Z_{n,k})_{k=1,\dots,\kappa_n}$ be an array of exchangeable
$\mathbb R^D$-valued random variables, and let
$\Psi:\mathbb R^D\to\mathbb R$ be a continuous function.
Assume that $\kappa_n\to\infty$ and that

\begin{itemize}
\item[(i)] for every fixed $C\ge1$,
$(Z_{n,1},\dots,Z_{n,C}) \stackrel{\CL}{\longrightarrow} (Z_1,\dots,Z_C)$,
where $(Z_k)_{k\ge1}$ are i.i.d.;

\item[(ii)] there exists $\varepsilon>0$ such that
$\sup_{n\ge1} \E\!\left[ |\Psi(Z_{n,1})|^{1+\varepsilon} \right] <\infty$.
\end{itemize}

Then
$\frac1{\kappa_n} \sum_{k=1}^{\kappa_n} \Psi(Z_{n,k}) \stackrel{\PP}{\longrightarrow} \E\!\left[ \Psi(Z_1) \right]$.

\end{lemma}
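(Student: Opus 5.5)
The plan is a second-moment (Chebyshev) argument for the empirical average, after truncating $\Psi$ to reduce to bounded functions. Set $\bar\Psi_n:=\kappa_n^{-1}\sum_{k=1}^{\kappa_n}\Psi(Z_{n,k})$.

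\emph{Truncation and uniform integrability.} For $R>0$ let $\Psi_R:=(\Psi\wedge R)\vee(-R)$, which is bounded and continuous. By exchangeability and hypothesis (ii),
\[
\E\Bigl|\bar\Psi_n-\tfrac{1}{\kappa_n}\textstyle\sum_k\Psi_R(Z_{n,k})\Bigr|\le \E\bigl[|\Psi(Z_{n,1})|\mathbf 1_{\{|\Psi(Z_{n,1})|>R\}}\bigr]\le R^{-\varepsilon}\sup_n\E|\Psi(Z_{n,1})|^{1+\varepsilon},
\]
uniformly in $n$. Next, (i) with $C=1$ and the continuous mapping theorem give $\Psi(Z_{n,1})\stackrel{\CL}{\longrightarrow}\Psi(Z_1)$. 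Fatou's lemma (via Skorokhod representation) yields $\E|\Psi(Z_1)|^{1+\varepsilon}<\infty$, so $|\E\Psi(Z_1)-\E\Psi_R(Z_1)|\le R^{-\varepsilon}\,\E|\Psi(Z_1)|^{1+\varepsilon}$. With both errors small uniformly in $n$, it suffices to prove the claim for bounded continuous $\Psi$, say $|\Psi|\le R$.

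\emph{Second moment computation.} For bounded $\Psi$, exchangeability gives
\[
\E\Bigl[\Bigl(\bar\Psi_n-\E\Psi(Z_1)\Bigr)^2\Bigr]
=\frac{1}{\kappa_n}\E\bigl[(\Psi(Z_{n,1})-a)^2\bigr]+\Bigl(1-\frac1{\kappa_n}\Bigr)\E\bigl[(\Psi(Z_{n,1})-a)(\Psi(Z_{n,2})-a)\bigr],
\]
where $a:=\E\Psi(Z_1)$. The first term is at most $4R^2/\kappa_n\to0$. For the second, (i) with $C=2$ gives $(Z_{n,1},Z_{n,2})\stackrel{\CL}{\longrightarrow}(Z_1,Z_2)$, and $(u,v)\mapsto(\Psi(u)-a)(\Psi(v)-a)$ is bounded and continuous. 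The second term therefore converges to $\E[(\Psi(Z_1)-a)(\Psi(Z_2)-a)]=(\E\Psi(Z_1)-a)^2=0$, using the independence of $Z_1,Z_2$. Hence $\bar\Psi_n\to a$ in $L^2$, and so in probability. Combining this with the truncation step, for fixed $\delta>0$ one chooses $R$ large and then $n$ large, and Markov's inequality gives the result.

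The main point needing care is the truncation step: we must control both the empirical error, which is uniform in $n$ by (ii) and exchangeability, and the limit error, which requires transferring the moment bound to $Z_1$ by Fatou. The rest is routine. Note that only $C=1$ and $C=2$ in (i) are actually used.
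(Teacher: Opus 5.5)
Your proof is correct, and it takes a genuinely different route from the paper. The paper extends the array to an infinite sequence and uses (i) for every $C$ to get convergence in law in $(\RE^D)^{\mathbb N}$. It then passes to a Skorokhod representation with $Z_{n,k}\to Z_k$ a.s.\ for each fixed $k$, and applies the i.i.d.\ law of large numbers to $\frac1n\sum_k\Psi(Z_k)$. Finally it bounds the coupling error by $\E|\Delta_n|\le\frac1n\sum_{k\le n}\E|\Psi(Z_{n,k})-\Psi(Z_k)|=\E|\Psi(Z_{n,1})-\Psi(Z_1)|$, together with uniform integrability from (ii).

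You instead truncate and run a Chebyshev computation. It needs only exchangeability of pairs and the cases $C=1,2$ of (i). The $1+\varepsilon$ moment enters only through the uniform control of the truncation error, and through the same Fatou step the paper uses, to compare $\E\Psi(Z_1)$ with $\E\Psi_R(Z_1)$.

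Your route buys more than economy. The equality in the paper's bound on $\E|\Delta_n|$ tacitly requires the pairs $(Z_{n,k},Z_k)$, $k\le n$, to be identically distributed. That means a coupling invariant under simultaneous permutations of the first $n$ coordinates, which Skorokhod's theorem does not provide. Almost sure convergence in the product topology controls each fixed coordinate, but gives nothing uniform over $k\le n$, so the averaged coupling error is not automatically small. Securing such a coupling is essentially as hard as the lemma itself.

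Your second-moment argument never couples the array with the limiting i.i.d.\ sequence, so this issue does not arise. It is self-contained, and it actually yields $\E\bigl|\frac1{\kappa_n}\sum_k\Psi(Z_{n,k})-\E\Psi(Z_1)\bigr|\to0$, i.e.\ convergence in $L^1$ and not just in probability.
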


\begin{proof}%[Proof of Lemma \ref{lemma:empirical_average_exchangeable}]
Without real loss of generality we assume $\kappa_n=n$. 
Define $Z_{n,k}=Z_{n,n}$ if $k \geq n$. 
First of all note that 
$(Z_{n,k})_{k\ge1}\stackrel{\CL}{\to} (Z_k)_{k\ge1}$ 
in $(\mathbb{R}^D)^{\mathbb N}$.
Indeed, by assumption, the finite-dimensional distributions converge and 
tightness in $(\mathbb{R}^D)^{\mathbb N}$ can be proved by a standard product of compacts argument. 
Hence, since $(\mathbb{R}^D)^{\mathbb N}$ is Polish, by Skorokhod's representation theorem, possibly changing the probability space, we may assume that
\[
(Z_{n,k})_{k\ge1}\to (Z_k)_{k\ge1}
\qquad\text{almost surely in }(\mathbb{R}^D)^{\mathbb N}.
\]
In particular, for every fixed $k\ge1$,
$Z_{n,k}\to Z_k$ almost surely.
Since $\Psi$ is continuous and $Z_{n,1}\to Z_1$ almost surely, one has
$\Psi(Z_{n,1}) \to \Psi(Z_1)$ almost surely.
Fatou's lemma and (A2) yield
\begin{equation}\label{fatou}
\E\bigl[|\Psi(Z_1)|^{1+\varepsilon}\bigr]
\le
\liminf_{n\to\infty}
\E\bigl[|\Psi(Z_{n,1})|^{1+\varepsilon}\bigr]
<+\infty.
\end{equation}
We decompose
$\frac1n\sum_{k=1}^n \Psi(Z_{n,k}) = \frac1n\sum_{k=1}^n \Psi(Z_k)+\Delta_n$,
where
$\Delta_n := \frac1n\sum_{k=1}^n\bigl(\Psi(Z_{n,k})-\Psi(Z_k)\bigr)$.
Since $(Z_k)_{k\ge1}$ are iid
and \eqref{fatou} holds, the law of large numbers gives
\[
\frac1n\sum_{k=1}^n \Psi(Z_k)
\stackrel{\PP}{\to}
\E[\Psi(Z_1)].
\]
It remains to prove that $\Delta_n\to0$ in probability. By the triangle inequality and exchangeability,
\[
\E|\Delta_n|
\le
\frac1n\sum_{k=1}^n \E\bigl[|\Psi(Z_{n,k})-\Psi(Z_k)|\bigr]
=
\E\bigl[|\Psi(Z_{n,1})-\Psi(Z_1)|\bigr].
\]

Moreover,
$\E|\Psi(Z_{n,1})-\Psi(Z_1)|^{1+\varepsilon} \le 2^\varepsilon \Big (\E \big[|\Psi(Z_{n,1})|^{1+\varepsilon}\big]+\E\big [|\Psi(Z_1)|^{1+\varepsilon}\big]\Big)$,
so by  \eqref{fatou} the sequence
$\bigl(|\Psi(Z_{n,1})-\Psi(Z_1)|\bigr)_{n\ge1}$
is uniformly integrable. Therefore
$\E\bigl[|\Psi(Z_{n,1})-\Psi(Z_1)|\bigr]\to0$,
and hence
$\E|\Delta_n|\to0$.
This implies $\Delta_n\to0$ in $L^1$, and therefore in probability. The claim follows.
\end{proof}

The next lemma identifies the limit of the conditional variances under the
induction hypothesis.

\begin{lemma}
\label{lemma:variance_limit}
Under  {\rm Assumptions~\ref{A1CNN-bis}--\ref{A3nongaussian}--\ref{A2_0CNN-bis}},  
fix $\ell=1,\dots,L$ and assume the induction hypothesis
\begin{equation}
\label{indH}
\text{for every }C\ge1,\qquad
(U_1^{(\ell,n)},\dots,U_C^{(\ell,n)})
\stackrel{\CL}{\longrightarrow}
(U_1^{(\ell)},\dots,U_C^{(\ell)}),
\end{equation}
where $(U_k^{(\ell)})_{k\ge1}$ are i.i.d.
Then, for every fixed $C^*\ge1$ and every
$\alpha\in\mathbb R^{C^*\times N_{\ell+1}\times P}$,
$V_n^{(\ell)}(\alpha) \stackrel{\PP}{\longrightarrow} \sigma_\ell^2(\alpha) := \E\!\left[ \Phi_{\ell,\alpha}(U_1^{(\ell)}) \right]$.
\end{lemma}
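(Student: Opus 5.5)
The plan is to apply Lemma~\ref{lemma:empirical_average_exchangeable} with $Z_{n,k}:=U_k^{(\ell,n)}$, which are $\RE^{D}$-valued with $D=N_\ell P$. We take $\kappa_n:=C_\ell(n)$ and $\Psi:=\Phi_{\ell,\alpha}$. We already have the representation $V_n^{(\ell)}(\alpha)=\frac1{C_\ell}\sum_{c'=1}^{C_\ell}\Phi_{\ell,\alpha}(U_{c'}^{(\ell,n)})$, and the family $(U_c^{(\ell,n)})_{c}$ is exchangeable. Once the hypotheses of Lemma~\ref{lemma:empirical_average_exchangeable} are checked, its conclusion is exactly $V_n^{(\ell)}(\alpha)\stackrel{\PP}{\to}\E[\Phi_{\ell,\alpha}(U_1^{(\ell)})]=\sigma_\ell^2(\alpha)$.

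The checks go in order. First, $\kappa_n=C_\ell(n)\to\infty$ holds by Assumption~\ref{A1CNN-bis}. Second, $\Phi_{\ell,\alpha}$ is continuous, since it is a finite sum of squares of finite linear combinations of the continuous maps $u\mapsto T_m^{(i,\ell)}(u_{:,\mu})$ (Assumption~\ref{A2_0CNN-bis}). Third, condition (i) of Lemma~\ref{lemma:empirical_average_exchangeable} is precisely the induction hypothesis \eqref{indH}.

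The only substantive step is condition (ii), a uniform $(1+\varepsilon)$-moment bound. By \eqref{polyboundT} and $(a_1+\dots+a_k)^2\le k\sum a_j^2$, with $M_\ell,C^*,N_{\ell+1},P$ fixed, we get
\[
|\Phi_{\ell,\alpha}(u)|\le D_1\sum_{\mu=1}^P\bigl(1+\|u_{:,\mu}\|^{2r}\bigr).
\]
Hence
\[
\E|\Phi_{\ell,\alpha}(U_1^{(\ell,n)})|^{1+\varepsilon}\le D_2\Bigl(1+\sum_{\mu=1}^P\E\|h^{(\ell)}_{1,:}(\bx_\mu)\|^{2r(1+\varepsilon)}\Bigr).
\]
Proposition~\ref{Prop:boundmom} bounds $\E\|h^{(\ell)}_{1,:}(\bx)\|^{q_{\ell-1}}$ uniformly in $n$ and $\bx$. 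Here $q_{\ell-1}=r q_\ell\ge rq$ with $q>2$, so we can choose $\varepsilon>0$ such that $2(1+\varepsilon)\le q$; for instance $\varepsilon=q/2-1$. Then $2r(1+\varepsilon)\le rq\le q_{\ell-1}$, and monotonicity of $L^s$ norms gives $\sup_n\E|\Phi_{\ell,\alpha}(U_1^{(\ell,n)})|^{1+\varepsilon}<\infty$.

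The main point of care is exactly this moment budget: the square in $\Phi_{\ell,\alpha}$ doubles the growth exponent to $2r$, and the strict inequality $q>2$ is what leaves room for the extra $\varepsilon$. With (ii) verified, Lemma~\ref{lemma:empirical_average_exchangeable} applies directly and concludes the proof.
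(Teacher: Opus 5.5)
Your proposal is correct and matches the paper's proof essentially step for step. Like the paper, you apply Lemma~\ref{lemma:empirical_average_exchangeable} with $Z_{n,k}=U_k^{(\ell,n)}$, $\Psi=\Phi_{\ell,\alpha}$ and $\varepsilon=q/2-1$, and you obtain the uniform $(1+\varepsilon)$-moment bound from the $2r$-growth of $\Phi_{\ell,\alpha}$ together with Proposition~\ref{Prop:boundmom}, using $2r(1+\varepsilon)=rq\le q_{\ell-1}$; you also spell out the checks $\kappa_n=C_\ell\to\infty$ and exchangeability, which the paper leaves implicit.
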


\begin{proof}

Since each $T_m^{(i,\ell)}$ is continuous and all sums are finite,
$\Phi_{\ell,\alpha}:\mathbb R^{N_\ell\times P} \longrightarrow \mathbb R$
is continuous.
We apply Lemma~\ref{lemma:empirical_average_exchangeable} with
$Z_{n,k}=U_k^{(\ell,n)}$,  $\Psi=\Phi_{\ell,\alpha}$ and $\varepsilon=\frac q2-1>0$.
By \eqref{polyboundT}, there exists a constant $D_1>0$ such that
\[
|\Phi_{\ell,\alpha}(u)|
\le
D_1
\left(
1+\sum_{\mu=1}^P
\|u_{:,\mu}\|^{2r}
\right),
\qquad
u\in\mathbb R^{N_\ell\times P}.
\]
Hence
\[
|\Phi_{\ell,\alpha}(U_1^{(\ell,n)})|^{1+\varepsilon}
\le
D_2
\left(
1+
\sum_{\mu=1}^P
\|h^{(\ell)}_{1,:}(\bx_\mu)\|^{2r(1+\varepsilon)}
\right).
\]
Since
$2r(1+\varepsilon) =rq \leq r q =q_{L-1} \leq q_{\ell-1}$.
Proposition~\ref{Prop:boundmom} gives
\[
\sup_{n\ge1}
\E\!\left[
|\Phi_{\ell,\alpha}(U_1^{(\ell,n)})|^{1+\varepsilon}
\right]
<\infty.
\]
Thus, Lemma~\ref{lemma:empirical_average_exchangeable} and the induction hypothesis \eqref{indH} gives 
\[
V_n^{(\ell)}(\alpha)
=
\frac1{C_\ell}
\sum_{c'=1}^{C_\ell}
\Phi_{\ell,\alpha}
\bigl(
U_{c'}^{(\ell,n)}
\bigr)
\stackrel{\PP}{\longrightarrow}
\E\!\left[
\Phi_{\ell,\alpha}(U_1^{(\ell)})
\right],
\]
which is the desired conclusion.

\end{proof}
%%%%%%%%%%%%%%%%%%%%%%%%%%%%%%%%%%%%%%%%%%%%%%%%%%%%%%%%%%%%%%%
% Lyapunov CLT
%%%%%%%%%%%%%%%%%%%%%%%%%%%%%%%%%%%%%%%%%%%%%%%%%%%%%%%%%%%%%%%
%%%%%%%%%%%%%%%%%%%%%%%%%%%%%%%%%%%%%%%%%%%%%%%%%%%%%%%%%%%%%%%%%%%%%
% Gaussian limit for the linear projections
%%%%%%%%%%%%%%%%%%%%%%%%%%%%%%%%%%%%%%%%%%%%%%%%%%%%%%%%%%%%%%%%%%%%%

\subsection{Gaussian approximation and finite-dimensional convergence}\label{Sec:GausLP}

We shall use the following quantitative version of the Lyapunov central
limit theorem.

\begin{lemma}[Lyapunov central limit theorem]
\label{Lyapunovlemma}
Let $Z_1,\dots,Z_n$ be independent centered random variables such that
$\E|Z_j|^{2+\delta}<\infty$ for some $\delta\in(0,1]$. Define
$v_n = \sum_{j=1}^n \Var(Z_j)$
and let
$\phi_n(t) = \E\!\left[ e^{it\sum_{j=1}^n Z_j} \right]$
be the characteristic function of $\sum_{j=1}^n Z_j$.
Then
\[
\left|
\phi_n(t)-\phi_0(\sqrt{v_n}\,t)
\right|
\le
D|t|^{2+\delta}
\sum_{j=1}^n
\E|Z_j|^{2+\delta},
\]
where
$\phi_0(t)=e^{-t^2/2}$
and $D>0$ is an absolute constant.
\end{lemma}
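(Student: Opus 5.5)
We prove the bound by comparing characteristic functions directly. Write $S=\sum_{j=1}^n Z_j$ and $\sigma_j^2=\Var(Z_j)$, so that $\phi_0(\sqrt{v_n}\,t)=\prod_{j=1}^n e^{-\sigma_j^2t^2/2}$. Setting $f_j(t)=\E[e^{itZ_j}]$, independence gives $\phi_n(t)=\prod_j f_j(t)$. All factors in both products have modulus at most one, so the elementary telescoping inequality $|\prod_j a_j-\prod_j b_j|\le\sum_j|a_j-b_j|$ for $|a_j|,|b_j|\le1$ yields
\[
\left|\phi_n(t)-\phi_0(\sqrt{v_n}\,t)\right|
\le\sum_{j=1}^n\left|f_j(t)-e^{-\sigma_j^2t^2/2}\right|.
\]
It therefore suffices to prove, for each $j$, the one-variable estimate $|f_j(t)-e^{-\sigma_j^2t^2/2}|\le D|t|^{2+\delta}\E|Z_j|^{2+\delta}$ with an absolute constant $D$.

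To prove this estimate, insert the intermediate quantity $1-\sigma_j^2t^2/2$ and use the triangle inequality. For the first piece, apply the standard Taylor bound $|e^{ix}-1-ix+x^2/2|\le\min(|x|^3/6,x^2)\le|x|^{2+\delta}$, valid for $\delta\in(0,1]$ because $\min(a^3,a^2)\le a^{2+\delta}$ for $a\ge0$. Taking $x=tZ_j$ and expectations, and using $\E Z_j=0$, gives $|f_j(t)-1+\sigma_j^2t^2/2|\le|t|^{2+\delta}\E|Z_j|^{2+\delta}$. For the second piece, $|e^{-y}-1+y|\le y^2/2$ for $y\ge0$, and then $\min(y^2/2,\,2y)\le 2y^{1+\delta/2}$ gives $|e^{-\sigma_j^2t^2/2}-1+\sigma_j^2t^2/2|\le C\,(\sigma_j^2t^2)^{1+\delta/2}$.

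The one genuinely delicate step is converting $(\sigma_j^2t^2)^{1+\delta/2}$ into the Lyapunov moment. Lyapunov's (or Jensen's) inequality gives $\sigma_j^{2+\delta}=(\E Z_j^2)^{(2+\delta)/2}\le\E|Z_j|^{2+\delta}$. Hence $(\sigma_j^2t^2)^{1+\delta/2}=\sigma_j^{2+\delta}|t|^{2+\delta}\le|t|^{2+\delta}\E|Z_j|^{2+\delta}$, with no restriction on the size of $t$. Summing both pieces over $j$ gives the claim with an absolute constant $D$, for instance $D=1+C$.
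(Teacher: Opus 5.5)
Your argument is correct and is the standard characteristic-function proof: the telescoping bound $|\prod_j a_j-\prod_j b_j|\le\sum_j|a_j-b_j|$, the second-order Taylor remainder for $e^{ix}$ interpolated to order $2+\delta$, and Lyapunov's inequality $\sigma_j^{2+\delta}\le\E|Z_j|^{2+\delta}$. The paper omits this proof and refers to Chow--Teicher for it. The only unstated detail is the linear branch $0\le e^{-y}-1+y\le y$ behind your $\min(y^2/2,2y)$; it is immediate, and with it you can take $D=2$.
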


The proof is standard and it is omitted. 
See e.g.\ Section 8.4 in \cite{ChowTeicher}.  

The next lemma establishes the central limit theorem for the linear
projections.

\begin{lemma}
\label{lemma:conditional_clt_projection}
Under  {\rm Assumptions~\ref{A1CNN-bis}--\ref{A3nongaussian}--\ref{A2_0CNN-bis}}, fix $\ell=1,\dots,L$ and assume the induction hypothesis
\eqref{indH}. Then, for every
$\alpha\in\mathbb R^{C^*\times N_{\ell+1}\times P}$,
one has
\[
S_n^{(\ell)}(\alpha) = \frac1{\sqrt{C_\ell}} \sum_{c'=1}^{C_\ell} \xi_{c'}^{(n,\ell)}(\alpha) \stackrel{\CL}{\longrightarrow} \CN\!\left( 0,\sigma_\ell^2(\alpha) \right).
\]
\end{lemma}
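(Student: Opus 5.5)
We will prove convergence of the characteristic function of $S_n^{(\ell)}(\alpha)$ by conditioning on $\CF_\ell$ and applying Lemma~\ref{Lyapunovlemma} to the conditionally independent summands. Fix $t\in\RE$. Conditionally on $\CF_\ell$, the variables $Z_{c'}:=C_\ell^{-1/2}\xi_{c'}^{(n,\ell)}(\alpha)$, $c'=1,\dots,C_\ell$, are independent and centered. Indeed, $\xi_{c'}$ involves only the weights $W^{(\ell)}_{m,c',c}$ with that fixed $c'$, and these are independent of $\CF_\ell$ and of each other across $c'$. Their conditional variances sum to $V_n^{(\ell)}(\alpha)$ by \eqref{eq:conditional_variance_sum}.

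Choose $p\in(2,\min(q,3)]$ and set $\delta=p-2\in(0,1]$. Applying Lemma~\ref{Lyapunovlemma} under the regular conditional law given $\CF_\ell$ gives
\[
\Bigl|\E\bigl[e^{itS_n^{(\ell)}(\alpha)}\,\big|\,\CF_\ell\bigr]-e^{-t^2V_n^{(\ell)}(\alpha)/2}\Bigr|
\le D|t|^{p}\,\frac{1}{C_\ell^{p/2}}\sum_{c'=1}^{C_\ell}\E\bigl[|\xi_{c'}^{(n,\ell)}(\alpha)|^p\,\big|\,\CF_\ell\bigr]
\]
almost surely. The conditional moments needed here are finite a.s. because the weights have finite $q$-th moment and $p\le q$.

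Taking expectations and using the tower property, we obtain
\[
\bigl|\E e^{itS_n^{(\ell)}(\alpha)}-\E e^{-t^2V_n^{(\ell)}(\alpha)/2}\bigr|\le D|t|^p\,\E\Bigl[C_\ell^{-p/2}\sum_{c'}\E\bigl[|\xi_{c'}|^p\mid\CF_\ell\bigr]\Bigr].
\]
The right-hand side tends to $0$ by Lemma~\ref{lemma:lyapunov_cnn}, which gives convergence in $L^1$. For the remaining term, Lemma~\ref{lemma:variance_limit}, which uses the induction hypothesis \eqref{indH}, gives $V_n^{(\ell)}(\alpha)\to\sigma_\ell^2(\alpha)$ in probability. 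Since $v\mapsto e^{-t^2v/2}$ is bounded and continuous on $[0,\infty)$ and $V_n\ge0$, bounded convergence yields $\E e^{-t^2V_n/2}\to e^{-t^2\sigma_\ell^2(\alpha)/2}$. Hence the characteristic functions converge pointwise, and Lévy's continuity theorem concludes the proof.

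The main technical point is the conditional application of Lemma~\ref{Lyapunovlemma}. It requires a version of the conditional law under which the $\xi_{c'}$ are genuinely independent. I would handle this explicitly by writing $S_n^{(\ell)}$ as a measurable function of the $\CF_\ell$-measurable vector $(U_{c'}^{(\ell,n)})_{c'}$ and of the independent layer-$\ell$ weights. Then, by independence, the conditional expectation equals the unconditional expectation over the weights with $U$ frozen, and the lemma applies for each fixed value of $U$. A second point to check is that the lemma needs $\delta\le1$, which is why $p$ is truncated at $3$. Lemma~\ref{lemma:lyapunov_cnn} holds for every $p\in(2,q]$, so this truncation costs nothing.
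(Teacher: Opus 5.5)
Your proposal is correct and follows the paper's proof. Both condition on $\CF_\ell$, apply Lemma~\ref{Lyapunovlemma} with $p\in(2,\min\{q,3\}]$, control the error term in $L^1$ via Lemma~\ref{lemma:lyapunov_cnn}, and handle the Gaussian term with Lemma~\ref{lemma:variance_limit} plus bounded convergence. Your extra step of realizing the conditional law by freezing the $\CF_\ell$-measurable matrices $U_{c'}^{(\ell,n)}$ against the independent layer-$\ell$ weights justifies a point the paper leaves implicit.
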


\begin{proof}
Conditionally on $\CF_\ell$, the random variables
$ \xi_{c'}^{(n,\ell)}(\alpha)$,  ${c'=1,\dots,C_\ell}$,
are independent and centered. Define
$Z_{n,c'} = \frac1{\sqrt{C_\ell}} \xi_{c'}^{(n,\ell)}(\alpha)$,
so that
$S_n^{(\ell)}(\alpha) = \sum_{c'=1}^{C_\ell} Z_{n,c'}$.
Let
$\phi_n(t) = \E[ e^{itS_n^{(\ell)}(\alpha)} | \CF_\ell ]$
denote the conditional characteristic function.
Fix $p\in(2,\min\{q,3\}]$
and write $p=2+\delta$, with $\delta\in(0,1]$.
Applying Lemma~\ref{Lyapunovlemma} conditionally on $\CF_\ell$ gives
$$
\left| \phi_n(t) - \phi_0(\sqrt{v_n}\,t) \right| \le D|t|^p \sum_{c'=1}^{C_\ell} \E\!\left[ |Z_{n,c'}|^p \,\middle|\, \CF_\ell \right],
$$
where
$v_n = \sum_{c'=1}^{C_\ell} \Var(Z_{n,c'}\mid\CF_\ell) = V_n^{(\ell)}(\alpha)$.
Hence
\[
\left|
\phi_n(t)
-
\phi_0\!\left(
\sqrt{V_n^{(\ell)}(\alpha)}\,t
\right)
\right|
\le
D|t|^p
\frac1{C_\ell^{p/2}}
\sum_{c'=1}^{C_\ell}
\E\!\left[
|\xi_{c'}^{(n,\ell)}(\alpha)|^p
\,\middle|\,
\CF_\ell
\right].
\]
By Lemma~\ref{lemma:lyapunov_cnn},
the left hand side converges to zero in $L^1$.
Therefore
\[
\left|
\E\!\left[
e^{itS_n^{(\ell)}(\alpha)}
\right]
-
\E\!\left[
\phi_0\!\left(
\sqrt{V_n^{(\ell)}(\alpha)}\,t
\right)
\right]
\right|
\longrightarrow
0.
\]
By Lemma~\ref{lemma:variance_limit},
$V_n^{(\ell)}(\alpha) \stackrel{\PP}{\longrightarrow} \sigma_\ell^2(\alpha)$.
Since
$0 \le \phi_0(\sqrt{x},t) = e^{-t^2x/2} \le 1$,
the continuous mapping theorem and dominated convergence yield
\[
\E\left[ \phi_0\left( \sqrt{V_n^{(\ell)}(\alpha)} \, t \right) \right] \to \exp\left( -\frac12t^2\sigma_\ell^2(\alpha) \right).
\]
Consequently,
$\E[ e^{itS_n^{(\ell)}(\alpha)}] \to \exp ( -\frac12t^2\sigma_\ell^2(\alpha) )$, which gives the thesis. 
%which is the characteristic function of
%$\CN(0,\sigma_\ell^2(\alpha))$.
\end{proof}

We now use the convergence of the linear projections to complete the induction step for the finite-dimensional distributions.

\begin{proof}[Proof of Theorem \ref{thm:cnn_clt_FD}]
At the first layer, the random matrices $U_c^{(1,n)}$ defined in \eqref{defU} do not depend on $n$ and are i.i.d.\ across $c$, since distinct channels depend on independent families of
weights. Hence, for every fixed $C\geq1$,
\[
\bigl(
U_1^{(1,n)},\ldots,U_C^{(1,n)}
\bigr)
\stackrel{\CL}{\longrightarrow}
\bigl(
U_1^{(1)},\ldots,U_C^{(1)}
\bigr),
\]
where $(U_c^{(1)})_{c\geq1}$ are i.i.d.\ copies of $U_1^{(1,n)}$. Thus \eqref{indH} holds for $\ell=1$.

Now fix $\ell=1,\ldots,L$, assume the induction hypothesis \eqref{indH}, and fix $C^*<\infty$ if $\ell<L$
(or $C^*=C_{L+1}$ if $\ell=L$).
For every
$\alpha\in\mathbb R^{C^*\times N_{\ell+1}\times P}$,
\[
S_n^{(\ell)}(\alpha)
=
\sum_{c=1}^{C^*}
\sum_{i=1}^{N_{\ell+1}}
\sum_{\mu=1}^P
\alpha_{c,i,\mu}
\bigl(U_c^{(\ell+1,n)}\bigr)_{i,\mu}.
\]
By Lemma~\ref{lemma:conditional_clt_projection},
\[
S_n^{(\ell)}(\alpha)
\stackrel{\CL}{\longrightarrow}
\CN\bigl(0,\sigma_\ell^2(\alpha)\bigr).
\]
It remains to identify the covariance associated with this limiting
quadratic form. Expanding the square in $\Phi_{\ell,\alpha}$ gives
\[
\sigma_\ell^2(\alpha)
=
\E\bigl[\Phi_{\ell,\alpha}(U_1^{(\ell)})\bigr]
=
\sum_{c=1}^{C^*}
\sum_{i,j=1}^{N_{\ell+1}}
\sum_{\mu,\nu=1}^P
\alpha_{c,i,\mu}\alpha_{c,j,\nu}\,
\CK^{(\ell+1)}_{i,j}(\bx_\mu,\bx_\nu)
\]
which is precisely the quadratic form associated with the covariance matrix $\Sigma^{(\ell+1)}$ defined in \eqref{def.sigma.matrix}.
The Cramér--Wold theorem yields that
$\bigl(U_1^{(\ell+1,n)},\ldots,U_{C^*}^{(\ell+1,n)}\bigr)$
converges in distribution to a centered Gaussian vector with covariance matrix $\Sigma^{(\ell+1)}$. Since $\Sigma^{(\ell+1)}$ is block-diagonal across channels, the limiting matrices
$U_1^{(\ell+1)},\ldots,U_{C^*}^{(\ell+1)}$
are independent and identically distributed, so that \eqref{indH} holds at layer $\ell+1$.

By induction, \eqref{indH} holds for every $\ell=1,\ldots,L$, which gives the claimed Gaussian convergence at the hidden layers $\ell=2,\ldots,L$. Finally,
applying the above induction step with $\ell=L$ and $C^*=C_{L+1}$ gives the claimed convergence at the output layer $\ell=L+1$.
\end{proof}

\subsection{Functional convergence}\label{Sec:Kolmogorov--Chentsov}

We conclude the proof of Theorem~\ref{thm:cnn_clt} by combining the
finite-dimensional convergence of  Theorem \ref{thm:cnn_clt_FD} with the
increment bounds of Proposition~\ref{lem:increments}.

\begin{proof}[Proof of Theorem~\ref{thm:cnn_clt}]
Fix an admissible layer $\ell$ and $C^*$, and set
$d=C_0N_0$, $p=q_{\ell-1}$.
By Proposition~\ref{lem:increments}, since $C^*$ is fixed, there exists
$B<+\infty$ such that
\[
\sup_{n\geq1}
\E\left[
\left\|
\CH_{1:C^*}^{(\ell,n)}(\bx)
-
\CH_{1:C^*}^{(\ell,n)}(\by)
\right\|^p
\right]
\leq
B\|\bx-\by\|^p,
\qquad \bx,\by\in\CX \subset \RE^d.
\]
%Moreover,
%$p=q_{\ell-1}\geq q>d$.

For every fixed $\bx\in\CX$, Theorem~\ref{thm:cnn_clt_FD}, with $P=1$,
implies that the sequence
$\bigl(\CH_{1:C^*}^{(\ell,n)}(\bx)\bigr)_{n\geq1}$ is tight in
$\RE^{C^* \times N_\ell}$.

If $\CX$ were a compact rectangle, the above increment estimate,
together with $p=q_{\ell-1}\geq q>d$, would yield tightness by the classical
Kolmogorov--Chentsov criterion. Since $\CX$ is allowed here to be an
arbitrary compact subset of $\RE^d$, we use instead the more general
tightness criterion in Proposition~4.1 of \cite{KratschmerUrusov2023}.  It follows that the laws of
$\CH_{1:C^*}^{(\ell,n)}$ are tight in
$C(\CX,\RE^{C^* \times N_\ell})$.

Combining this tightness result with the finite-dimensional convergence
established in Theorem~\ref{thm:cnn_clt_FD}, Lemma 16.2 \cite{Kallenberg} yields
$
\CH_{1:C^*}^{(\ell,n)}
\stackrel{\CL}{\longrightarrow}
\CH_{1:C^*}^{(\ell,\infty)}$ in $
C(\CX,\RE^{C^* \times N_\ell}),
$
which concludes the proof.
\end{proof}

%\bibliographystyle{amsplain}
%\bibliography{biblioCNN}

\providecommand{\bysame}{\leavevmode\hbox to3em{\hrulefill}\thinspace}
\providecommand{\MR}{\relax\ifhmode\unskip\space\fi MR }
% \MRhref is called by the amsart/book/proc definition of \MR.
\providecommand{\MRhref}[2]{%
  \href{http://www.ams.org/mathscinet-getitem?mr=#1}{#2}
}
\providecommand{\href}[2]{#2}

\end{document}